\newtheorem{thm}{Theorem}[section]
\newtheorem{lem}[thm]{Lemma}
\newtheorem{prop}[thm]{Proposition}
\theoremstyle{definition}
\newtheorem{defn}[thm]{Definition}
\newtheorem{example}[thm]{Example}
\theoremstyle{remark}
\newtheorem{rem}[thm]{Remark}
\numberwithin{equation}{section}
\begin{document}
\title[Complex powers of multivalued linear...]{Complex powers of multivalued linear operators with polynomially bounded $C$-resolvent}

\author{Marko Kosti\' c}
\address{Faculty of Technical Sciences,
University of Novi Sad,
Trg D. Obradovi\' ca 6, 21125 Novi Sad, Serbia}
\email{markokostic121@yahoo.com}

{\renewcommand{\thefootnote}{} \footnote{2010 {\it Mathematics
Subject Classification.} 47D06, 47D60,
47D62, 47D99.
\\ \text{  }  \ \    {\it Key words and phrases.} Complex powers of multivalued linear operators,
$C$-resolvent sets, abstract incomplete fractional differential inclusions,
abstract incomplete differential inclusions of second order,
locally convex spaces.
\\  \text{  } The author is partially supported by grant 174024 of Ministry
of Science and Technological Development, Republic of Serbia.}}

\begin{abstract}
In the paper under review, we construct complex powers of multivalued linear operators with polynomially bounded $C$-resolvent existing on an appropriate region of the complex plane containing the interval $(-\infty,0].$
In our approach, the operator $C$ is not necessarily injective. We clarify
the basic properties of introduced powers and analyze the abstract incomplete fractional differential inclusions
associated with the use of modified Liuoville right-sided derivatives.
We
also consider abstract  incomplete differential inclusions of second order, working in the general setting of sequentially complete locally convex spaces. Our results seem to be completely new even in the Banach space setting.
\end{abstract}
\maketitle

\section{Introduction and preliminaries}

Chronologically, the first results about fractional powers of non-negative multivalued linear operators was given by
El H. Alaarabiou \cite{bateks1}-\cite{bateks2} in 1991. In these papers, he extended the well known Hirsch functional calculus
to the class ${\mathcal M}$ of non-negative multivalued linear operators in a complex Banach space. Unfortunately,
the method proposed in \cite{bateks1}-\cite{bateks2} had not allowed one to consider
the product formula and the spectral mapping theorem for powers. Nine years later, in 2000,
C. Mart\' inez, M. Sanz and J. Pastor \cite{mart-MSP} improved a functional calculus established in \cite{bateks1}-\cite{bateks2}, providing a
new definition of fractional powers. A very stable and
consistent theory of fractional powers of the operators belonging to the class ${\mathcal M}$
has been constructed, including within itself the above-mentioned product formula, spectral mapping theorem,
as well as almost all other fundamental properties of fractional powers of non-negative
single-valued linear operators. Some later contributions have been given by J. Pastor \cite{pastor},
who considered relations between
the multiplicativity and uniqueness of fractional powers of non-negative multivalued linear operators.

The first applications
of results from the theory of multivalued linear operators to abstract degenerate differential equations were given by
A. Yagi (\cite{yagi}, 1991). In his well-known joint monograph with A. Favini \cite{faviniyagi},
the class of multivalued linear operators ${\mathcal A},$ acting on a complex Banach space
$(X,\| \cdot \|),$ for which
$(-\infty,0]\subseteq \rho({\mathcal A})$ and there exist finite numbers $M_{1}\geq 1,$ $\beta \in (0,1]$ such that
\begin{align}\label{fracvt-power-t-power}
\| R(\lambda : {\mathcal A}) \| \leq M_{1}\bigl(1+|\lambda|\bigr)^{-\beta},\quad \lambda \leq 0,
\end{align}
has been thoroughly analyzed. Assuming that (\ref{fracvt-power-t-power}) is true,
the usual von Neumann's expansion argument shows that there exist positive real constants
$c>0$ and $M>0$
such that
the resolvent set of ${\mathcal A}$ contains an open region $\Omega_{c,M} \supseteq \Omega_{c,M}' :=\{
\lambda \in {\mathbb C} : |\Im \lambda| \leq  (2M)^{-1} (c-\Re \lambda)^{\beta},\ \Re \lambda \leq c\},$
where we have the estimate
$
\| R(\lambda : {\mathcal A}) \| =O((1+|\lambda|)^{-\beta}),$ $ \lambda \in \Omega_{c,M} .$
Let $\Gamma '$ be the upwards oriented curve
$\{ \xi \pm  i(2M)^{-1}(c-\xi)^{\beta} : -\infty <\xi \leq c \}.$ In \cite{faviniyagi},
A. Favini and A. Yagi define the fractional power $
{\mathcal A}^{-\theta},$ for $\Re \theta >1-\beta ,$ by
$$
{\mathcal A}^{-\theta}:=\frac{1}{2\pi i}\int_{\Gamma'}\lambda^{-\theta} \bigl( \lambda-{\mathcal A} \bigr)^{-1}\,
d\lambda ,
$$
${\mathcal A}^{\theta}:=({\mathcal A}^{-\theta})^{-1}$ ($\Re \theta >1-\beta$); then
${\mathcal A}^{-\theta}\in L(E)$ for $\Re \theta >1-\beta ,$ and the semigroup properties
${\mathcal A}^{-\theta_{1}}{\mathcal A}^{-\theta_{2}}={\mathcal A}^{-(\theta_{1}+\theta_{2})},$ 
${\mathcal A}^{\theta_{1}}{\mathcal A}^{\theta_{2}}={\mathcal A}^{\theta_{1}+\theta_{2}}$ of powers
hold for $\Re \theta_{1},\ \Re \theta_{2}>1-\beta .$
The case $\beta \in (0,1)$ occurs in many applications and
then we cannot define satisfactorily the power
${\mathcal A}^{\theta}$ for $|\Re \theta| \leq 1-\beta .$ As explained in the introductory part of paper
\cite{favaron} by A. Favaron and A. Favini, the method of closed extensions used in the pioneering works \cite{balak} by
A. Balakrishnan
and \cite{komi} by H. Komatsu cannot be used here for construction of power ${\mathcal A}^{\theta}$
($\Re \theta \in (0,1-\beta)$). In this place, we would like to observe that the method proposed by
F. Periago, B. Straub \cite{pb1} and C. Martinez, M. Sanz, A. Redondo \cite{martinez1} (cf. also \cite{fatih})
cannot be of any help for construction of power ${\mathcal A}^{\theta}$ ($\Re \theta \in (0,1-\beta)$), as well.
In \cite[Section 9]{favaron}, the fractional power ${\mathcal A}^{\theta}$ has been constructed for
$|\Re \theta| \leq 1-\beta,$ provided the validity of condition \cite[(H3)]{favaron}.
In general case $\beta \in (0,1),$ the condition (H3) does not hold.

In order to motivate our research, assume that $\alpha \geq -1$ and a closed multivalued linear operator ${\mathcal A}$ satisfies:
\begin{center}
\begin{itemize}
\item[($\lozenge $)] $(0,\infty) \subseteq \rho({\mathcal A})$ and
\item[($\lozenge \lozenge$)] $\sup_{\lambda >0}(1+|\lambda|)^{-\alpha}||R(\lambda :
{\mathcal A})|| <\infty .$
\end{itemize}
\end{center}
Given $\beta \geq -1,$ $\varepsilon \in (0, 1],$ $d\in (0,1],$ $c'\in
(0,1)$ and $\theta \in (0,\pi],$ put $B_{d}:=\{z
\in \mathbb{C} : |z|\leq d\},$ $\Sigma_{\theta}:=\{ z\in {\mathbb C}
: z\neq 0,\ \arg (z) \in (-\theta, \theta)  \}$ and $P_{\beta,
\varepsilon, c'}:=\{ \xi +i\eta : \xi \geq \varepsilon,\ \eta \in
{\mathbb R},\ \ |\eta|\leq c'( 1+\xi )^{-\beta}\}.$ Then
the usual series argument yields that the
hypotheses ($\lozenge $)-($\lozenge \lozenge$) imply the existence
of numbers $d\in (0,1],$ $c\in (0,1),$ $\varepsilon \in (0,1]$ and $M>0$ such
that:
\begin{itemize}
\item[$(\S)$] $P_{\alpha, \varepsilon, c} \cup B_{d} \subseteq \rho({\mathcal A}),$
$(\varepsilon,c(1+\varepsilon)^{-\alpha}) \in \partial B_{d}$ and
\item[($\S \S$)] $||R(\lambda : {\mathcal A})||\leq M(1+|\lambda|)^{\alpha}, \ \lambda \in
P_{\alpha, \varepsilon, c} \cup B_{d}.$
\end{itemize}

Suppose now that $X$ is a Hausdorff sequentially complete
locally convex space over the field of complex numbers, SCLCS for short.
If $Y$ is also an SCLCS, then we denote by $L(X,Y)$ the space consisting of all continuous linear mappings from $X$ into
$Y;$ $L(X)\equiv L(X,X).$ By $\circledast_{X}$ ($\circledast$, if there is no risk for confusion), we denote the fundamental system of seminorms which defines the topology of $X.$

Keeping in mind the above analysis (the notion will be explained a little bit later), it seems reasonable to introduce the following condition:
\begin{itemize}
\item[(H)$_{0}:$] Let $C\in L(X)$ be not necessarily injective, let ${\mathcal A}$ be closed, and let $C{\mathcal A}\subseteq {\mathcal A}C.$
There exist real numbers $d\in (0,1],$ $c\in (0,1),$ $\varepsilon \in (0,1]$ and $\alpha \geq -1$ such
that $P_{\alpha, \varepsilon, c} \cup B_{d} \subseteq \rho_{C}({\mathcal A}),$ the operator family $\{ (1+|\lambda|)^{-\alpha}(\lambda - {\mathcal A})^{-1}C : \lambda \in P_{\alpha, \varepsilon, c} \cup B_{d}\} \subseteq L(X)$ is equicontinuous,
the mapping $\lambda \mapsto (\lambda - {\mathcal A})^{-1}C$ is strongly analytic on int$(P_{\alpha, \varepsilon, c} \cup B_{d})$ and strongly continuous on $\partial (P_{\alpha, \varepsilon, c} \cup B_{d}).$
\end{itemize}

The first aim of this paper is construction of complex power
$(-{\mathcal A})_{b},$ $b\in {\mathbb C}$ of a multivalued
linear operator
${\mathcal A}$ satisfying the condition (H)$_{0}.$ Although very elegant and elementary, our construction has some serious
disadvantages because
the introduced powers behave very badly (for example, we cannot expect the additivity property of powers clarified in \cite[Remark 2.11]{TAJW}) in the case that the regularizing operator $C_{1},$ defined below, is not injective (since the resolvents and $C$-resolvents of a really multivalued linear operator are not injective, this is the main case in our considerations).
The method proposed
for construction of power $(-{\mathcal A})_{b}$ is
different from that already employed in single-valued linear case \cite{complex-new-single};
in this paper, we first apply regularization with the operator $C_{1}$
and follow after that the
approach from our joint research paper with C. Chen, M. Li and M. \v Zigi\' c \cite{TAJW}.
In particular, we define any complex power of a
multivalued linear operator satisfying (\ref{fracvt-power-t-power}) and not (H3).

The following sectorial analogue
of (H) is most important in applications:
\begin{itemize}
\item[(HS)$_{0}:$] Let $C\in L(X)$ be not necessarily injective, let ${\mathcal A}$ be closed, and let $C{\mathcal A}\subseteq {\mathcal A}C.$
There exist real numbers $d\in (0,1],$ $\vartheta \in (0,\pi/2)$ and $\alpha \geq -1$ such
that $\Sigma_{\vartheta} \cup B_{d} \subseteq \rho_{C}({\mathcal A}),$ the operator family $\{ (1+|\lambda|)^{-\alpha}(\lambda - {\mathcal A})^{-1}C : \lambda \in \Sigma_{\vartheta} \cup B_{d}\} \subseteq L(X)$ is equicontinuous,
the mapping $\lambda \mapsto (\lambda - {\mathcal A})^{-1}C$ is strongly analytic on int$(\Sigma_{\vartheta}\cup B_{d})$ and strongly continuous on $\partial (\Sigma_{\vartheta} \cup B_{d}).$
\end{itemize}

The construction of power ($-{\mathcal A})_{b},$ $b\in {\mathbb C}$ of a multivalued linear operator ${\mathcal A}$
for which
$0\notin \text{int}(\rho_{C}({\mathcal A}))$ is without the scope of this paper. For the sake of brevity and better exposition,
we will not compare the construction presented here with those appearing in \cite{bateks1}-\cite{bateks2},
\cite{balak}, \cite{fatih}, \cite{ralfinjo}, \cite{favaron}, \cite{komi}, \cite{complex-new-single}, \cite{mart-MSP}, \cite{pb1} and
\cite{hir}, if it makes any sense for doing so.

The second aim of this paper is to show that a great number of resolvent equations and
generalized resolvent equations holds for $C$-resolvents of multivalued linear operators,
where $C$ is non-injective, in general.
The third and, simultaneously, the main aim of this paper is to continue our recent research study \cite{inc}
of abstract incomplete fractional degenerate differential equations with modified Liouville right-sided fractional
derivatives \cite{knjigaho} and abstract incomplete degenerate differential equations of second order.
We consider fractionally integrated $C_{1}$-regularized semigroups generated by the negatives of introduced powers, and provide a few relevant applications of our theoretical results to abstract incomplete degenerate PDEs.

The organization of material is briefly described as follows. In the second section, we collect the basic definitions and results
from the theory of multivalued linear operators that are necessary for our further work; in
two separate subsections, we consider
$C$-resolvents of multivalued linear operators and generation of fractionally integrated $C$-semigroups by
multivalued linear operators. In the third section, we define the complex powers of operators satisfying
(H)$_{0}$ or (HS)$_{0}$ and clarify their most intriguing properties. The fourth section of paper
is completely devoted to the study of abstract incomplete differential inclusions.

We use the standard notation throughout the paper.
Unless specifed otherwise,
we assume
that $X$ is a Hausdorff sequentially complete
locally convex space over the field of complex numbers, SCLCS for short.
Let ${\mathcal B}$ be the family consisting of all bounded subsets\index{bounded subset} of $X,$ and
let $p_{{\mathbb B}}(T):=\sup_{x\in {\mathbb B}}p(Tx),$ $p\in \circledast_{X},$ ${\mathbb B}\in
{\mathcal B},$ $T\in L(X).$ Then $p_{{\mathbb B}}(\cdot)$ is a seminorm\index{seminorm} on
$L(X)$ and the system $(p_{{\mathbb B}})_{(p,{\mathbb B})\in \circledast_{X} \times
{\mathcal B}}$ induces the Hausdorff locally convex topology on
$L(X).$
If $X$ is a Banach space, then we denote by $\|x\|$ the norm of an element $x\in X.$
The Hausdorff locally convex topology on $X^{\ast},$ the dual space\index{dual space} of $X,$
defines the
system $(|\cdot|_{{\mathbb B}})_{{\mathbb B}\in {\mathcal B}}$ of seminorms on
$X^{\ast},$ where and in the sequel $|x^{\ast}|_{{\mathbb B}}:=\sup_{x\in
{\mathbb B}}|\langle x^{\ast}, x \rangle |,$ $x^{\ast} \in X^{\ast},$ ${\mathbb B} \in
{\mathcal B}.$
Let us recall that the spaces $L(X)$ and $X^{\ast}$ are sequentially
complete provided that $X$ is barreled\index{barreled space} (\cite{meise}). Set
$g_{\zeta}(t):=t^{\zeta-1}/\Gamma(\zeta),$ $\lfloor s \rfloor:=\sup\{n\in {\mathbb Z}:\allowbreak n\leq s\},$
$\lceil s\rceil :=\inf\{n\in {\mathbb Z}: s\leq n\}$ ($\zeta>0,$ $s\in{\mathbb R}$) and ${\mathbb C}_{+}:=\{z\in {\mathbb C} : \Re z>0\};$ here, $\Gamma(\cdot)$ denotes the Gamma function.
Let $\delta_{\cdot,\cdot}$ denote the Kronecker delta.

If $V$ is a general topological vector space,
then a function $f :
\Omega \rightarrow V,$ where $\Omega$ is an open subset of ${\mathbb
C},$ is said to be analytic if it is locally expressible in a
neighborhood of any point $z\in \Omega$ by a uniformly convergent
power series with coefficients in $V.$
We refer the reader to \cite[Section 1.1]{knjigaho} and references cited there for the basic information about vector-valued analytic functions.
In our approach the space $X$ is sequentially complete, so that the analyticity of a mapping
$f: \Omega \rightarrow X$ ($\emptyset \neq \Omega \subseteq {\mathbb C}$) is equivalent with its weak analyticity.

The integration of functions with values in SCLCSs is still an active field of research. In  this paper, we follow the approach used in the monograph \cite{martinez} by C. Martinez and M. Sanz, see pp. 99-102.
Concerning the Laplace transform of functions with values in SCLCSs, we refer the reader to \cite{FKP} and \cite{x263}; cf. \cite{a43}
for the Banach space case. Fairly complete information on abstract degenerate differential equations can be obtained by consulting the monographs \cite{carol}, \cite{dem}, \cite{faviniyagi}, \cite{FKP}, \cite{me152} and \cite{svir-fedorov}.

Considerable interest in fractional calculus and fractional differential equations
has been stimulated due to their numerous applications in engineering, physics, chemistry, biology and other sciences (see e.g. \cite{bajlekova}, \cite{Diet}, \cite{knjigah} and
\cite{Po}-\cite{samko}).
In this paper, we use the modified
Liouville right-sided fractional
derivatives. Suppose that $\beta>0$ and $\beta \notin {\mathbb N}.$
Then the Liouville right-sided fractional
derivative of order $\beta$ (see \cite[(2.3.4)]{kilbas} for the scalar-valued case) is defined for those continuous
functions\index{fractional derivatives!Liouville right-sided}
$u : (0,\infty) \rightarrow X$ for which $\lim_{T\rightarrow \infty}\int_{s}^{T}g_{\lceil \beta \rceil -\beta}(t-s)u(t)\, dt=\int_{s}^{\infty}g_{\lceil \beta \rceil -\beta}(t-s)u(t)\, dt$
exists and defines a $\lceil \beta \rceil$-times continuously differentiable function on $(0,\infty)$, by
$$
{\bf D}_-^\beta u(s):=(-1)^{\lceil \beta \rceil}\frac{d^{\lceil \beta \rceil}}{ds^{\lceil \beta \rceil}}\int \limits_{s}^{\infty}g_{\lceil \beta \rceil -\beta}(t-s)u(t)\, dt,\quad s>0.
$$
We define the modified
Liouville right-sided fractional
derivative of order $\beta,$ $D^{\beta}_{-}u(s)$ shortly, for those continuously differentiable
functions\index{fractional derivatives!Liouville right-sided}
$u : (0,\infty) \rightarrow X$ for which $\lim_{T\rightarrow \infty}\int_{s}^{T}g_{\lceil \beta \rceil -\beta}(t-s)u^{\prime}(t)\, dt=\int_{s}^{\infty}g_{\lceil \beta \rceil -\beta}(t-s)u^{\prime}(t)\, dt$
exists and defines a $\lceil \beta -1\rceil$-times continuously differentiable function on $(0,\infty)$, by
$$
D_-^\beta u(s):=(-1)^{\lceil \beta \rceil}\frac{d^{\lceil \beta -1 \rceil}}{ds^{\lceil \beta -1\rceil}}\int \limits_{s}^{\infty}g_{\lceil \beta \rceil -\beta}(t-s)u^{\prime}(t)\, dt,\quad s>0;
$$
if $\beta =n\in {\mathbb N},$ then ${\bf D}^{n}_{-}u$ and $D^{n}_{-}u$ are defined for all $n$-times continuously differentiable functions $u(\cdot)$ on $(0,\infty),$
by ${\bf D}^{n}_{-}u:=D^{n}_{-}u:= (-1)^{n}d/d^{n},$ where $d/d^{n}$ denotes the usual
derivative operator of order $n$ (cf. also \cite[(2.3.5)]{kilbas}).

\section{Multivalued linear operators}

In this section, we will present some necessary definitions from the theory of multivalued
linear operators. For more details about this topic, we refer the reader to the monographs by R. Cross \cite{cross} and A. Favini, A. Yagi \cite{faviniyagi}.

Let $X$ and $Y$ be two sequentially complete locally convex spaces over the field of complex numbers.
A multivalued map ${\mathcal A} : X \rightarrow P(Y)$ is said to be a multivalued
linear operator (MLO) iff the following holds:
\begin{itemize}
\item[(i)] $D({\mathcal A}) := \{x \in X : {\mathcal A}x \neq \emptyset\}$ is a linear subspace of $X$;
\item[(ii)] ${\mathcal A}x +{\mathcal A}y \subseteq {\mathcal A}(x + y),$ $x,\ y \in D({\mathcal A})$
and $\lambda {\mathcal A}x \subseteq {\mathcal A}(\lambda x),$ $\lambda \in {\mathbb C},$ $x \in D({\mathcal A}).$
\end{itemize}
If $X=Y,$ then we say that ${\mathcal A}$ is an MLO in $X.$
As an almost immediate consequence of definition, we have that the equality $\lambda {\mathcal A}x + \eta {\mathcal A}y = {\mathcal A}(\lambda x + \eta y)$ holds
for every $x,\ y\in D({\mathcal A})$ and for every $\lambda,\ \eta \in {\mathbb C}$ with $|\lambda| + |\eta| \neq 0.$ If ${\mathcal A}$ is an MLO, then ${\mathcal A}0$ is a linear manifold in $Y$
and ${\mathcal A}x = f + {\mathcal A}0$ for any $x \in D({\mathcal A})$ and $f \in {\mathcal A}x.$ Define $R({\mathcal A}):=\{{\mathcal A}x :  x\in D({\mathcal A})\}.$
Then the set $N({\mathcal A}):={\mathcal A}^{-1}0 = \{x \in D({\mathcal A}) : 0 \in {\mathcal A}x\}$ is called the kernel
of ${\mathcal A}.$ The inverse ${\mathcal A}^{-1}$ of an MLO is defined by
$D({\mathcal A}^{-1}) := R({\mathcal A})$ and ${\mathcal A}^{-1} y := \{x \in D({\mathcal A}) : y \in {\mathcal A}x\}$.
It can be easily seen that ${\mathcal A}^{-1}$ is an MLO in $X,$ as well as that $N({\mathcal A}^{-1}) = {\mathcal A}0$
and $({\mathcal A}^{-1})^{-1}={\mathcal A}.$ If $N({\mathcal A}) = \{0\},$ i.e., if ${\mathcal A}^{-1}$ is
single-valued, then ${\mathcal A}$ is said to be injective. If ${\mathcal A},\ {\mathcal B} : X \rightarrow P(Y)$ are two MLOs, then we define its sum ${\mathcal A}+{\mathcal B}$ by $D({\mathcal A}+{\mathcal B}) := D({\mathcal A})\cap D({\mathcal B})$ and $({\mathcal A}+{\mathcal B})x := {\mathcal A}x +{\mathcal B}x,$ $x\in D({\mathcal A}+{\mathcal B}).$
It is clear that ${\mathcal A}+{\mathcal B}$ is likewise an MLO. We write ${\mathcal A} \subseteq {\mathcal B}$ iff $D({\mathcal A}) \subseteq D({\mathcal B})$ and ${\mathcal A}x \subseteq {\mathcal B}x$
for all $x\in D({\mathcal A}).$

Let ${\mathcal A} : X \rightarrow P(Y)$ and ${\mathcal B} : Y\rightarrow P(Z)$ be two MLOs, where $Z$ is an SCLCS. The product of ${\mathcal A}$
and ${\mathcal B}$ is defined by $D({\mathcal B}{\mathcal A}) :=\{x \in D({\mathcal A}) : D({\mathcal B})\cap {\mathcal A}x \neq \emptyset\}$ and
${\mathcal B}{\mathcal A}x:=
{\mathcal B}(D({\mathcal B})\cap {\mathcal A}x).$ Then ${\mathcal B}{\mathcal A} : X\rightarrow P(Z)$ is an MLO and
$({\mathcal B}{\mathcal A})^{-1} = {\mathcal A}^{-1}{\mathcal B}^{-1}.$ The scalar multiplication of an MLO ${\mathcal A} : X\rightarrow P(Y)$ with the number $z\in {\mathbb C},$ $z{\mathcal A}$ for short, is defined by
$D(z{\mathcal A}):=D({\mathcal A})$ and $(z{\mathcal A})(x):=z{\mathcal A}x,$ $x\in D({\mathcal A}).$ It is clear that $z{\mathcal A}  : X\rightarrow P(Y)$ is an MLO and $(\omega z){\mathcal A}=\omega(z{\mathcal A})=z(\omega {\mathcal A}),$ $z,\ \omega \in {\mathbb C}.$

The integer powers of an MLO ${\mathcal A} :  X\rightarrow P(X)$ is defined recursively as follows: ${\mathcal A}^{0}=:I;$ if ${\mathcal A}^{n-1}$ is defined, set\index{multivalued linear operator!integer powers}
$$
D({\mathcal A}^{n}) := \bigl\{x \in  D({\mathcal A}^{n-1}) : D({\mathcal A}) \cap {\mathcal A}^{n-1}x \neq \emptyset \bigr\},
$$
and
$$
{\mathcal A}^{n}x := \bigl({\mathcal A}{\mathcal A}^{n-1}\bigr)x =\bigcup_{y\in  D({\mathcal A}) \cap {\mathcal A}^{n-1}x}{\mathcal A}y,\quad x\in D( {\mathcal A}^{n}).
$$
We can prove inductively that $({\mathcal A}^{n})^{-1} = ({\mathcal A}^{n-1})^{-1}{\mathcal A}^{-1} = ({\mathcal A}^{-1})^{n}=:{\mathcal A}^{-n},$ $n \in {\mathbb N}$
and $D((\lambda-{\mathcal A})^{n})=D({\mathcal A}^{n}),$ $n \in {\mathbb N}_{0},$ $\lambda \in {\mathbb C}.$ Moreover,
if ${\mathcal A}$ is single-valued, then the above definitions are consistent with the usual definitions of powers of ${\mathcal A}.$

We say that an MLO ${\mathcal A} : X\rightarrow P(Y)$ is closed if for any
nets $(x_{\tau})$ in $D({\mathcal A})$ and $(y_{\tau})$ in $Y$ such that $y_{\tau}\in {\mathcal A}x_{\tau}$ for all $\tau\in I$ we have that the suppositions $\lim_{\tau \rightarrow \infty}x_{\tau}=x$ and
$\lim_{\tau \rightarrow \infty}y_{\tau}=y$ imply
$x\in D({\mathcal A})$ and $y\in {\mathcal A}x$ (cf. \cite{meise} for the notion). As it is well-known any MLO ${\mathcal A}$ is closable and its closure $\overline{{\mathcal A}}$ is a closed MLO. 

Suppose that ${\mathcal A} : X\rightarrow P(Y)$ is an MLO. Then we define the adjoint ${\mathcal A}^{\ast}: Y^{\ast}\rightarrow P(X^{\ast})$\index{multivalued linear operator!adjoint}
of ${\mathcal A}$ by its graph
$$
{\mathcal A}^{\ast}:=\Bigl\{ \bigl( y^{\ast},x^{\ast}\bigr)  \in Y^{\ast} \times X^{\ast} :  \bigl\langle y^{\ast},y \bigr\rangle =\bigl \langle x^{\ast}, x\bigr \rangle \mbox{ for all pairs }(x,y)\in {\mathcal A} \Bigr\}.
$$
It is simpy verified that ${\mathcal A}^{\ast}$
is a closed MLO, and that $ \langle y^{\ast},y \rangle =0$ whenever $y^{\ast}\in D({\mathcal A}^{\ast})$ and $y\in {\mathcal A}0.$
It can be easily checked that the equations \cite[(1.2)-(1.6)]{faviniyagi} continue to hold for adjoints of MLOs acting on locally convex spaces.

We need the following auxiliary lemma from \cite{FKP}.

\begin{lem}\label{integracija-tricky}
Let $\Omega$ be a locally compact, separable metric space,
and let $\mu$ be a locally finite
Borel measure defined on $\Omega.$
Suppose that ${\mathcal A} : X\rightarrow P(Y)$ is a closed \emph{MLO.} Let $f : \Omega \rightarrow X$ and $g : \Omega \rightarrow Y$ be $\mu$-integrable, and let $g(x)\in {\mathcal A}f(x),$ $x\in \Omega.$ Then $\int_{\Omega}f\, d\mu \in D({\mathcal A})$ and $\int_{\Omega}g\, d\mu\in {\mathcal A}\int_{\Omega}f\, d\mu.$
\end{lem}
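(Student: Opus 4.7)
The plan is to reduce the statement to the familiar fact that continuous linear functionals commute with the integral, and then to exploit the closedness of $\mathcal{A}$ via its graph.

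First, I would form the product space $Z:=X\times Y$, which is again an SCLCS with the system of seminorms $p\oplus q$, $p\in\circledast_{X}$, $q\in\circledast_{Y}$. Consider the graph $G(\mathcal{A}):=\{(x,y)\in Z : y\in\mathcal{A}x\}$. Because $\mathcal{A}$ is a closed MLO, $G(\mathcal{A})$ is a linear subspace of $Z$ (by property (ii) of MLOs) and is sequentially closed; since $Z$ is sequentially complete and we shall only need Hahn--Banach separation, the sequential closedness is what matters. Define $h:\Omega\to Z$ by $h(x):=(f(x),g(x))$; the hypothesis $g(x)\in\mathcal{A}f(x)$ translates into $h(x)\in G(\mathcal{A})$ for every $x\in\Omega$.

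Next I would verify that $h$ is $\mu$-integrable in the Martinez--Sanz sense used in the paper, and that
\[
\int_{\Omega}h\,d\mu \;=\;\Bigl(\int_{\Omega}f\,d\mu,\,\int_{\Omega}g\,d\mu\Bigr).
\]
This is routine from the definition of the integral on pp.~99--102 of \cite{martinez}: the coordinate projections $\pi_{X}:Z\to X$ and $\pi_{Y}:Z\to Y$ are continuous and linear, so they commute with the integral, and $\mu$-integrability of $\pi_{X}\circ h=f$ and $\pi_{Y}\circ h=g$ implies $\mu$-integrability of $h$ with value as above.

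Now I would apply Hahn--Banach in $Z$. Let $z^{\ast}\in Z^{\ast}$ annihilate $G(\mathcal{A})$, i.e.\ $\langle z^{\ast},h(x)\rangle=0$ for all $x\in\Omega$. Since continuous linear functionals commute with the vector-valued integral in an SCLCS,
\[
\Bigl\langle z^{\ast},\int_{\Omega}h\,d\mu\Bigr\rangle \;=\;\int_{\Omega}\langle z^{\ast},h(x)\rangle\,d\mu(x)\;=\;0.
\]
Because $G(\mathcal{A})$ is a closed linear subspace of the Hausdorff locally convex space $Z$, it equals the intersection of the kernels of all continuous linear functionals vanishing on it; therefore $\int_{\Omega}h\,d\mu\in G(\mathcal{A})$, which is exactly the desired conclusion $\int_{\Omega}f\,d\mu\in D(\mathcal{A})$ and $\int_{\Omega}g\,d\mu\in\mathcal{A}\int_{\Omega}f\,d\mu$.

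The main subtlety I expect is the passage from \emph{sequentially} closed to \emph{topologically} closed for the Hahn--Banach step: in a general SCLCS a sequentially closed linear subspace need not be closed. I would handle this by working with the adjoint $\mathcal{A}^{\ast}$ defined earlier in the excerpt: by the characterization of $\mathcal{A}^{\ast}$ via its graph, the pairs $(y^{\ast},x^{\ast})\in\mathcal{A}^{\ast}$ provide exactly the continuous linear functionals on $Z$ of the form $(x,y)\mapsto\langle x^{\ast},x\rangle-\langle y^{\ast},y\rangle$ that annihilate $G(\mathcal{A})$, and a standard duality argument (as in \cite[(1.2)--(1.6)]{faviniyagi}, which the paper notes carries over to locally convex spaces) gives that $G(\mathcal{A})$ is the intersection of the kernels of exactly these functionals. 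Once this is in hand, the Hahn--Banach step goes through and the proof is complete.
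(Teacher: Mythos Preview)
The paper does not prove this lemma; it is simply quoted from the author's manuscript \cite{FKP}, so there is no in-paper argument to compare against. Your approach via the graph in $X\times Y$ and Hahn--Banach is the standard one and is correct.

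One correction: the ``subtlety'' you flag does not arise. Re-read the paper's definition of a closed MLO just above the lemma---it is phrased in terms of \emph{nets}, not sequences. Net-closedness of the graph in the Hausdorff locally convex space $X\times Y$ is exactly topological closedness, so $G(\mathcal{A})$ is a closed linear subspace outright, and the Hahn--Banach separation step applies directly. Your proposed detour through $\mathcal{A}^{\ast}$ and the identities \cite[(1.2)--(1.6)]{faviniyagi} is therefore unnecessary (though not incorrect). With that simplification, the argument is clean: $h=(f,g)$ is $\mu$-integrable in $X\times Y$ with $\int_{\Omega}h\,d\mu=(\int_{\Omega}f\,d\mu,\int_{\Omega}g\,d\mu)$, every $z^{\ast}\in(X\times Y)^{\ast}$ annihilating $G(\mathcal{A})$ annihilates $\int_{\Omega}h\,d\mu$, and closedness of $G(\mathcal{A})$ finishes it.
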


\subsection{$C$-Resolvent sets of multivalued linear operators}
In this subsection, we consider the $C$-resolvent sets of MLOs in locally convex spaces.\index{multivalued linear operator!$C$-resolvent} Our standing assumptions is that ${\mathcal A}$ is an MLO in $X$, as well as that $C\in L(X)$ and $C{\mathcal A}\subseteq {\mathcal A}C$ (this is equivalent to say that, for any $(x,y)\in X\times X,$ we have the implication $(x,y)\in {\mathcal A} \Rightarrow (Cx,Cy)\in {\mathcal A};$ then $C{\mathcal A}^{k}\subseteq {\mathcal A}^{k}C$ for all $k\in {\mathbb N}$). Here it is worth noting that we do not require the injectiveness of operator $C$ (cf. \cite{FKP} for more details concerning this case).
Then
the $C$-resolvent set of ${\mathcal A},$ $\rho_{C}({\mathcal A})$ for short, is defined as the union of those complex numbers
$\lambda \in {\mathbb C}$ for which
\begin{itemize}
\item[(i)] $R(C)\subseteq R(\lambda-{\mathcal A})$;
\item[(ii)] $(\lambda - {\mathcal A})^{-1}C$ is a single-valued linear continuous operator on $X.$
\end{itemize}
The operator $\lambda \mapsto (\lambda -{\mathcal A})^{-1}C$ is called the $C$-resolvent of ${\mathcal A}$ ($\lambda \in \rho_{C}({\mathcal A})$); the resolvent set of ${\mathcal A}$ is defined by $\rho({\mathcal A}):=\rho_{I}({\mathcal A}),$ $R(\lambda : {\mathcal A})\equiv  (\lambda -{\mathcal A})^{-1}$  ($\lambda \in \rho({\mathcal A})$).
The basic properties of $C$-resolvent sets of single-valued linear operators (\cite{knjigah}-\cite{knjigaho}) continue to hold in our framework; for instance, if $\rho({\mathcal A})\neq \emptyset,$ then ${\mathcal A}$ is closed; it is well known that this statement does not hold if $\rho_{C}({\mathcal A})\neq \emptyset$ for some (injective, non-injective) $C\neq I$ (cf. \cite[Example 2.2]{ralfinjo}). 

\begin{thm}\label{C-favini}
\begin{itemize}
\item[(i)]
We have
$$
\bigl( \lambda-{\mathcal A} \bigr)^{-1}C{\mathcal A}\subseteq \lambda \bigl( \lambda-{\mathcal A} \bigr)^{-1}C-C\subseteq {\mathcal A}\bigl( \lambda-{\mathcal A} \bigr)^{-1}C,\quad \lambda \in \rho_{C}({\mathcal A}).
$$
The operator $( \lambda-{\mathcal A})^{-1}C{\mathcal A}$ is single-valued on $D({\mathcal A})$ and $( \lambda-{\mathcal A})^{-1}C{\mathcal A}x=( \lambda-{\mathcal A})^{-1}Cy,$ whenever $y\in {\mathcal A}x$ and $\lambda \in \rho_{C}({\mathcal A}).$
\item[(ii)] Suppose that $\lambda,\ \mu \in \rho_{C}({\mathcal A}).$ Then the resolvent equation\index{multivalued linear operator!resolvent equation}
$$
\bigl( \lambda-{\mathcal A}\bigr)^{-1}C^{2}x-\bigl( \mu-{\mathcal A}\bigr)^{-1}C^{2}x=(\mu -\lambda)\bigl( \lambda-{\mathcal A}\bigr)^{-1}C\bigl( \mu-{\mathcal A}\bigr)^{-1}Cx,\quad x\in X
$$
holds good. In particular, $( \lambda-{\mathcal A})^{-1}C( \mu-{\mathcal A})^{-1}C=( \mu-{\mathcal A})^{-1}C (\lambda-{\mathcal A}\bigr)^{-1}C.$
\end{itemize}
\end{thm}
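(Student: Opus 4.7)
The plan is to exploit the commutation hypothesis $C{\mathcal A}\subseteq {\mathcal A}C$ throughout, together with the single-valuedness of $(\lambda-{\mathcal A})^{-1}C$ and the elementary identity that $(\lambda-{\mathcal A})u$ contains $\lambda u-v$ whenever $v\in {\mathcal A}u$. For part (i), I first fix $x\in D({\mathcal A})$ and $y\in {\mathcal A}x$. The commutation hypothesis gives $(Cx,Cy)\in {\mathcal A}$, so $C(\lambda x-y)=\lambda Cx-Cy$ belongs to $\lambda Cx-{\mathcal A}Cx \subseteq (\lambda-{\mathcal A})Cx$. In other words, $Cx$ is a preimage of $C(\lambda x-y)$ under $\lambda-{\mathcal A}$. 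Because $(\lambda-{\mathcal A})^{-1}C$ is single-valued and linear on $X$, applying it to $\lambda x-y$ gives $Cx=\lambda(\lambda-{\mathcal A})^{-1}Cx-(\lambda-{\mathcal A})^{-1}Cy$, which rearranges to $(\lambda-{\mathcal A})^{-1}Cy=\lambda(\lambda-{\mathcal A})^{-1}Cx-Cx$. Since the right-hand side is independent of the choice of $y\in {\mathcal A}x$, this simultaneously proves the first set inclusion and the asserted single-valuedness of $(\lambda-{\mathcal A})^{-1}C{\mathcal A}$ on $D({\mathcal A})$.

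For the second inclusion in (i), I would just unfold the definition of $(\lambda-{\mathcal A})^{-1}C$. Given any $z\in X$, set $u:=(\lambda-{\mathcal A})^{-1}Cz\in D({\mathcal A})$; then $Cz\in (\lambda-{\mathcal A})u=\lambda u-{\mathcal A}u$, so $\lambda u-Cz\in {\mathcal A}u$, that is, $\lambda(\lambda-{\mathcal A})^{-1}Cz-Cz\in {\mathcal A}(\lambda-{\mathcal A})^{-1}Cz$. This is the desired containment $\lambda(\lambda-{\mathcal A})^{-1}C-C\subseteq {\mathcal A}(\lambda-{\mathcal A})^{-1}C$.

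For part (ii), I would bootstrap (i). Put $u:=(\mu-{\mathcal A})^{-1}Cx$, so $\mu u-Cx\in {\mathcal A}u$. A preliminary lemma, proved in one line from $C{\mathcal A}\subseteq {\mathcal A}C$, is the identity $C(\mu-{\mathcal A})^{-1}Cx=(\mu-{\mathcal A})^{-1}C^{2}x$: apply $C$ to $\mu u-Cx\in {\mathcal A}u$ to get $\mu Cu-C^{2}x\in {\mathcal A}Cu$, whence $C^{2}x\in (\mu-{\mathcal A})Cu$, and use single-valuedness of $(\mu-{\mathcal A})^{-1}C$. Next, apply the identity from (i), evaluated at the pair $(u,\mu u-Cx)\in {\mathcal A}$, to obtain
$$
(\lambda-{\mathcal A})^{-1}C(\mu u-Cx)=\lambda(\lambda-{\mathcal A})^{-1}Cu-Cu.
$$
Expanding the left-hand side by linearity and substituting $Cu=(\mu-{\mathcal A})^{-1}C^{2}x$ gives
$$
\mu(\lambda-{\mathcal A})^{-1}Cu-(\lambda-{\mathcal A})^{-1}C^{2}x=\lambda(\lambda-{\mathcal A})^{-1}Cu-(\mu-{\mathcal A})^{-1}C^{2}x,
$$
which rearranges immediately to the claimed resolvent equation. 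The commutativity $(\lambda-{\mathcal A})^{-1}C(\mu-{\mathcal A})^{-1}C=(\mu-{\mathcal A})^{-1}C(\lambda-{\mathcal A})^{-1}C$ then follows by swapping $\lambda$ and $\mu$ and adding (the case $\lambda=\mu$ being trivial).

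The main conceptual obstacle I anticipate is to avoid sloppy use of additivity for MLOs: for a general MLO ${\mathcal B}$, ${\mathcal B}a+{\mathcal B}b$ is only contained in ${\mathcal B}(a+b)$, so naive manipulation of $(\lambda-{\mathcal A})^{-1}$ as if it were a linear map can fail. My plan circumvents this entirely by working with the single-valued operator $(\lambda-{\mathcal A})^{-1}C$ and by always pushing the multivalued step to the outer $(\lambda-{\mathcal A})u\ni \lambda u-v$ form, where the identity is genuine rather than merely an inclusion. The non-injectiveness of $C$ plays no explicit role in this argument, which is why the classical resolvent equations survive in exactly the form stated.
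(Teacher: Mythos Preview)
Your proof is correct and follows essentially the same route as the paper's. For part (i) you and the paper both reduce to the identity $(\lambda-{\mathcal A})^{-1}Cy=\lambda(\lambda-{\mathcal A})^{-1}Cx-Cx$ for $y\in{\mathcal A}x$ via $C{\mathcal A}\subseteq{\mathcal A}C$ and single-valuedness of $(\lambda-{\mathcal A})^{-1}C$; the second inclusion is handled identically. For part (ii) the paper phrases the argument as applying (i) to the shifted operator ${\mathcal A}-\mu$ evaluated at $({\mathcal A}-\mu)^{-1}Cx$, whereas you apply (i) directly to the pair $(u,\mu u-Cx)\in{\mathcal A}$ and isolate the identity $C(\mu-{\mathcal A})^{-1}Cx=(\mu-{\mathcal A})^{-1}C^{2}x$ as an explicit one-line lemma; these are the same computation, and your version is arguably cleaner since it makes the use of that commutation identity visible rather than implicit.
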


\begin{proof}
Albeit the proof can be deduced by using the argumentation contained in those of \cite[Theorem 1.7-Theorem 1.9]{faviniyagi}, we will include all relevant details. For the first inclusion in (i), let us assume that $\lambda \in \rho_{C}({\mathcal A})$ and  $(x,y)\in (\lambda -{\mathcal A})^{-1}C{\mathcal A},$ i.e., there exists $z\in {\mathcal A}x$ such that $y=(\lambda -{\mathcal A})^{-1}Cz.$ This implies $Cz \in \lambda y-{\mathcal A}y,$ $\lambda y \in Cz+{\mathcal A}y,$ and since $C{\mathcal A}\subseteq {\mathcal A}C,$ $\lambda y \in {\mathcal A}Cx+{\mathcal A}y={\mathcal A}(Cx+y).$ Hence, $\lambda Cx\in (\lambda -{\mathcal A})y+(\lambda-{\mathcal A})Cx=\lambda y +\lambda Cx-{\mathcal A}(Cx+y)$ so that $y+Cx=\lambda ( \lambda-{\mathcal A})^{-1}Cx$ and $y=\lambda ( \lambda-{\mathcal A})^{-1}Cx-Cx,$ as required. For the second inclusion in (i), let  $\lambda \in \rho_{C}({\mathcal A})$ and $y=( \lambda-{\mathcal A})^{-1}Cx.$ Then $\lambda y-Cx\in {\mathcal A}y$ so that $\lambda ( \lambda-{\mathcal A})^{-1}Cx-Cx\in {\mathcal A}( \lambda-{\mathcal A})^{-1}Cx.$ The Hilbert resolvent equation in (ii) can be shown as follows. Let $\lambda,\ \mu \in \rho_{C}({\mathcal A}).$ Using the second inclusion in (i), we get that
\begin{align*}
\Bigl( & (\lambda -\nu) -({\mathcal A} -\nu) \Bigr)^{-1}C({\mathcal A} -\nu)\bigl( {\mathcal A} -\nu \bigr)^{-1}Cx
\\ & = (\lambda -\nu)\Bigl( (\lambda -\nu) -({\mathcal A} -\nu) \Bigr)^{-1}C\bigl( {\mathcal A} -\nu \bigr)^{-1}Cx-\bigl({\mathcal A}-\nu \bigr)^{-1}C^{2}x,
\end{align*}
for any $x\in X.$ Since the operator $( (\lambda -\nu)- ({\mathcal A}-\nu))^{-1}C({\mathcal A}-\nu)$ is single-valued on $D({\mathcal A}-\nu)=D({\mathcal A}),$ the result immediately follows.
\end{proof}

Having in mind Theorem \ref{C-favini}, we can simply extend the assertion of \cite[Proposition 2.1.14]{knjigaho} to MLOs in locally convex spaces (see e.g. the proofs of \cite[Proposition 2.6, Corollary 2.8]{ralfinjo} for the Banach space setting); the only thing worth noting is the following: Suppose that ${\mathcal A}$ is closed. Then card$((\lambda- {\mathcal A})^{-n}Cx)\leq 1,$ $\lambda \in \rho_{C}({\mathcal A}),$ $n\in {\mathbb N},$ $x\in X.$ This can be proved by induction, observing that $(\lambda -{\mathcal A})^{-1}0$ is a singleton ($\lambda \in \rho_{C}({\mathcal A})$) as well as that for each $y\in (\lambda- {\mathcal A})^{-(n+1)}Cx$ ($\lambda \in \rho_{C}({\mathcal A}),$ $n\in {\mathbb N},$ $x\in X$) we have
\begin{align*}
\bigl(\lambda- {\mathcal A}\bigr)^{-(n+1)}Cx& =y+\bigl(\lambda- {\mathcal A}\bigr)^{-(n+1)}C0\\ &=y+\bigl(\lambda- {\mathcal A}\bigr)^{-1}\bigl(\lambda- {\mathcal A}\bigr)^{-n}C0=y+\bigl(\lambda- {\mathcal A}\bigr)^{-1}0=\{y\}.
\end{align*}
Now we can proceed as in the proof of \cite[Corollary 2.8]{ralfinjo} in order to see that the equation (\ref{creso}) in (iii) holds, and that
 (\ref{creso1}) in (iii) holds, provided in addition that $X$ is barreled.

\begin{prop}\label{2.16-favini}
Let $\emptyset \neq \Omega \subseteq \rho_{C}({\mathcal A})$ be open, and let $x\in X.$
\begin{itemize}
\item[(i)] The local boundedness of the mapping $\lambda \mapsto
(\lambda-{\mathcal A})^{-1}Cx,$ $\lambda \in \Omega,$ resp. the assumption that
$X$ is barreled and the local boundedness of the mapping $\lambda
\mapsto (\lambda-{\mathcal A})^{-1}C,\ \lambda \in \Omega ,$ implies the
analyticity of the mapping $\lambda \mapsto (\lambda-{\mathcal A})^{-1}C^{3}x,\
\lambda \in \Omega ,$ resp. $\lambda \mapsto (\lambda-{\mathcal A})^{-1}C^{3},\
\lambda \in \Omega .$ Furthermore, if $R(C)$ is dense in $X,$
resp. if $R(C)$ is dense in $X$ and $X$ is barreled, then the
mapping $\lambda \mapsto (\lambda-{\mathcal A})^{-1}Cx,\ \lambda \in \Omega $
is analytic, resp. the mapping $\lambda \mapsto (\lambda-{\mathcal A})^{-1}C,\
\lambda \in \Omega$ is analytic.
\item[(ii)] Suppose that $R(C)$ is dense in $X.$
Then the local boundedness of the mapping $\lambda \mapsto
(\lambda-{\mathcal A})^{-1}Cx,\ \lambda \in \Omega$ implies its analyticity. Furthermore, if $X$ is barreled, then the local boundedness of the
mapping $\lambda \mapsto (\lambda-{\mathcal A})^{-1}C,\ \lambda \in \Omega$
implies its analyticity.
\item[(iii)] Suppose that $R(C)$ is dense in $X$ and ${\mathcal A}$ is closed. Then we have $R(C)\subseteq \hbox{R}((\lambda-{\mathcal A})^{n}),\ n\in {\mathbb N}$ and
\begin{equation}\label{creso}
\frac{d^{n-1}}{d\lambda^{n-1}}\bigl(\lambda-{\mathcal A} \bigr)^{-1}Cx=(-1)^{n-1}(n-1)!
\bigl(\lambda-{\mathcal A}\bigr)^{-n}Cx,\ n\in {\mathbb N}.
\end{equation}
Furthermore, if $X$ is barreled, then $R(C) \subseteq
R((\lambda-{\mathcal A})^{n}),\ n\in {\mathbb N}$ and
\begin{equation}\label{creso1}
\frac{d^{n-1}}{d\lambda^{n-1}}\bigl(\lambda- {\mathcal A} \bigr)^{-1}C=(-1)^{n-1}(n-1)!
\bigl(\lambda-{\mathcal A}\bigr)^{-n}C\in L(X),\ n\in {\mathbb N}.
\end{equation}
\end{itemize}
\end{prop}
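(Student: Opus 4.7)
The plan is to establish all three parts by exploiting the resolvent identity of Theorem \ref{C-favini}(ii), the commutation $C\mathcal{A}\subseteq\mathcal{A}C$, and the singleton property of $(\lambda-\mathcal{A})^{-n}Cx$ recorded in the paragraph just before the statement.

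For the first half of (i), I would fix $\mu\in\Omega$ and substitute $Cx$ for $x$ in the resolvent equation to obtain
$$
(\lambda-\mathcal{A})^{-1}C^{3}x-(\mu-\mathcal{A})^{-1}C^{3}x=(\mu-\lambda)(\lambda-\mathcal{A})^{-1}C(\mu-\mathcal{A})^{-1}C^{2}x.
$$
Because $C\mathcal{A}\subseteq\mathcal{A}C$ and $(\mu-\mathcal{A})^{-1}C(\cdot)$ is single-valued, a short check gives $(\mu-\mathcal{A})^{-1}C^{2}x=C(\mu-\mathcal{A})^{-1}Cx=:Cz$, so the right-hand side becomes $(\mu-\lambda)(\lambda-\mathcal{A})^{-1}C^{2}z$. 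Reapplying the resolvent identity to $z$ shows $\lambda\mapsto(\lambda-\mathcal{A})^{-1}C^{2}z$ is continuous at $\mu$ (a vanishing scalar times a locally bounded quantity). Dividing by $\lambda-\mu$ and passing to the limit produces the complex derivative $-((\mu-\mathcal{A})^{-1}C)^{2}Cx$, hence analyticity of $\lambda\mapsto(\lambda-\mathcal{A})^{-1}C^{3}x$ on $\Omega$. When $X$ is barreled and the $L(X)$-valued map is locally bounded, the same computation upgrades to $L(X)$-convergence, since a locally bounded family of continuous operators on a barreled space is equicontinuous on compact sets.

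The dense-range refinement in (i) and part (ii) follow by density combined with local boundedness: every $v\in R(C)$ is of the form $v=Cy$, whence $(\lambda-\mathcal{A})^{-1}Cv=(\lambda-\mathcal{A})^{-1}C^{2}y$ inherits continuity from the resolvent identity, and the $C^{3}$-analyticity already established implies that $\lambda\mapsto(\lambda-\mathcal{A})^{-1}Cw$ is analytic for all $w\in R(C^{2})$. An $\varepsilon/3$-argument against the local-boundedness hypothesis then extends analyticity to all $x\in X$; in the barreled case this is the Vitali-type convergence theorem for vector-valued analytic functions on a locally convex space.

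For part (iii), once $\lambda\mapsto(\lambda-\mathcal{A})^{-1}Cx$ is analytic, all derivatives exist as elements of $X$; the content is identifying them with $(-1)^{n-1}(n-1)!(\lambda-\mathcal{A})^{-n}Cx$, which I would obtain by induction on $n$, differentiating once more at each step via the same resolvent-identity trick, and using the singleton property $\mathrm{card}((\lambda-\mathcal{A})^{-n}Cx)\leq 1$ recorded earlier to guarantee the right-hand side is an unambiguous element of $X$. The inclusion $R(C)\subseteq R((\lambda-\mathcal{A})^{n})$ drops out of the same induction, since each application of $(\lambda-\mathcal{A})^{-1}$ increases the power of the resolvent by one. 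The main obstacle I anticipate is the bookkeeping between the MLO expression $(\lambda-\mathcal{A})^{-n}Cx$ and its iterated single-valued realization $((\lambda-\mathcal{A})^{-1}C)^{n-1}(\lambda-\mathcal{A})^{-1}Cx$, together with checking that the density-plus-local-boundedness argument in (ii) genuinely transfers to the $L(X)$-valued derivative in the barreled case.
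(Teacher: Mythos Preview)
Your proposal is correct and follows essentially the same route as the paper. The paper does not give a detailed proof but simply points to the resolvent identity of Theorem~\ref{C-favini}, refers to the known single-valued arguments in \cite[Proposition 2.1.14]{knjigaho} and \cite[Proposition 2.6, Corollary 2.8]{ralfinjo}, and singles out as the one genuinely new ingredient the fact that $\mathrm{card}((\lambda-\mathcal{A})^{-n}Cx)\leq 1$ for closed $\mathcal{A}$ --- precisely the resolvent identity, density/local-boundedness passage, and singleton property that you build your argument on.
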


\begin{rem}\label{univ-prof} 
Let $\emptyset \neq \Omega \subseteq \rho_{C}({\mathcal A})$ be open, and let $x\in X.$
Suppose that the operator $C$ is injective.
Then the continuity of mapping $\lambda \mapsto
(\lambda-{\mathcal A})^{-1}Cx,\ \lambda \in \Omega$ implies its analyticity
and
\begin{align*}
\frac{d^{n-1}}{d\lambda^{n-1}}\bigl(\lambda-{\mathcal A} \bigr)^{-1}Cx=(-1)^{n-1}(n-1)!
\bigl(\lambda-\overline{{\mathcal A}}\bigr)^{-n}Cx,\ n\in {\mathbb N}.
\end{align*}
Furthermore, if $X$ is barreled, then the
continuity of mapping $\lambda \mapsto (\lambda-{\mathcal A})^{-1}C,\ \lambda
\in \Omega$ implies its analyticity and
\begin{align*}
\frac{d^{n-1}}{d\lambda^{n-1}}\bigl(\lambda- {\mathcal A} \bigr)^{-1}C=(-1)^{n-1}(n-1)!
\bigl(\lambda- \overline{{\mathcal A}}\bigr)^{-n}C\in L(X),\ n\in {\mathbb N};
\end{align*}
cf. also \cite[Remark 2.7]{ralfinjo}.
\end{rem}

The following generalized resolvent formulae are very important in our work; with the help of Theorem \ref{C-favini}, their validities can be simply proved inductively (compare with the generalized resolvent formula opening the third section of paper):

\begin{thm}\label{gen-res-MLO}
\begin{itemize}
\item[(i)] Let $x\in X,$ $k\in {\mathbb N}_{0}$ and $\lambda,\ z\in
\rho_{C}({\mathcal A})$ with $z\neq \lambda .$ Then the following holds:
\begin{align*}
\bigl(z&-{\mathcal A}\bigr)^{-1}C \Bigl(\bigl(\lambda-{\mathcal A}\bigr)^{-1}
C\Bigr)^{k}x
\\ &=\frac{(-1)^{k}}{(z-\lambda)^{k}}\bigl(z-{\mathcal A}\bigr)^{-1}C^{k+1}x + \sum
\limits^{k}_{i=1}\frac{(-1)^{k-i}\bigl((\lambda-{\mathcal A})^{-1}C\bigr)^{i}C^{k+1-i}x}{\bigl(z-\lambda\bigr)^{k+1-i}}.
\end{align*}
\item[(ii)] Let $k\in {\mathbb N}_{0},$ $x,\ y\in X$, $y\in (\lambda_{0}-{\mathcal A})^{k}x$
and $\lambda_{0},\ z\in
\rho_{C}({\mathcal A})$ with $z\neq \lambda_{0} .$ Then the following holds:
\begin{align*}
\bigl( z-{\mathcal A} \bigr)^{-1}C^{k+1}x&=\frac{(-1)^{k}}{\bigl(  z-\lambda_{0} \bigr)^{k}}\bigl( z-{\mathcal A} \bigr)^{-1}C^{k+1}y
\\&+\sum \limits_{i=1}^{k}\frac{(-1)^{k-i}\bigl(( \lambda_{0}-{\mathcal A})^{-1}C \bigr)^{i}C^{k+1-i}y}{\bigl( z-\lambda_{0}\bigr)^{k+1-i}}.
\end{align*}
\end{itemize}
\end{thm}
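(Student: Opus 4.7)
The plan is to establish both identities by induction on $k\in \mathbb{N}_{0}$. For part (i), the base case $k=0$ is immediate: both sides equal $(z-\mathcal{A})^{-1}Cx$, since $((\lambda-\mathcal{A})^{-1}C)^{0}=I$ and the finite sum is empty. For the inductive step, I would set $x':=(\lambda-\mathcal{A})^{-1}Cx$, so that $((\lambda-\mathcal{A})^{-1}C)^{k+1}x=((\lambda-\mathcal{A})^{-1}C)^{k}x'$; the inductive hypothesis applied with $x'$ in place of $x$ then reduces the task to re-expressing $(z-\mathcal{A})^{-1}C^{k+1}x'$ and $((\lambda-\mathcal{A})^{-1}C)^{i}C^{k+1-i}x'$ in terms of $x$. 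For the former, the Hilbert resolvent identity of Theorem \ref{C-favini}(ii), multiplied through by $C^{k}$, gives
$$
(z-\mathcal{A})^{-1}C^{k+1}x'=\frac{1}{z-\lambda}\bigl[(\lambda-\mathcal{A})^{-1}C^{k+2}x-(z-\mathcal{A})^{-1}C^{k+2}x\bigr].
$$
For the latter, the commutation relation $C(\lambda-\mathcal{A})^{-1}C=(\lambda-\mathcal{A})^{-1}C^{2}$ on $X$ (which follows from $C\mathcal{A}\subseteq \mathcal{A}C$ and the single-valuedness of $(\lambda-\mathcal{A})^{-1}C$ applied to $C^{2}x\in R(C)$) yields $((\lambda-\mathcal{A})^{-1}C)^{i}C^{k+1-i}x'=((\lambda-\mathcal{A})^{-1}C)^{i+1}C^{k+1-i}x$. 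A shift of index $j=i+1$ in the resulting sum, together with the $(z-\lambda)^{-(k+1)}$-terms produced by the Hilbert identity, reassembles the right-hand side in the claimed $(k+1)$-form.

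For part (ii), my plan is to reduce to part (i) specialized at $\lambda=\lambda_{0}$ through the auxiliary identity
$$
\bigl((\lambda_{0}-\mathcal{A})^{-1}C\bigr)^{k}y=C^{k}x \quad\text{whenever } y\in (\lambda_{0}-\mathcal{A})^{k}x.
$$
The case $k=1$ is handled directly: writing $y=\lambda_{0}x-z'$ with $z'\in \mathcal{A}x$ and applying Theorem \ref{C-favini}(i) gives $(\lambda_{0}-\mathcal{A})^{-1}Cz'=\lambda_{0}(\lambda_{0}-\mathcal{A})^{-1}Cx-Cx$, hence $(\lambda_{0}-\mathcal{A})^{-1}Cy=Cx$. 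For the inductive step one unfolds the recursive definition $(\lambda_{0}-\mathcal{A})^{k+1}x=\bigcup_{w\in D(\mathcal{A})\cap (\lambda_{0}-\mathcal{A})^{k}x}(\lambda_{0}-\mathcal{A})w$ to produce an intermediate element $w\in (\lambda_{0}-\mathcal{A})^{k}x$ with $y\in (\lambda_{0}-\mathcal{A})w$, and then combines the $k=1$ case applied to $(y,w)$, the inductive hypothesis applied to $(w,x)$, and the commutation of $C$ past $(\lambda_{0}-\mathcal{A})^{-1}C$. Substituting this auxiliary identity into part (i) with $\lambda=\lambda_{0}$ and $x$ replaced by $y$ then produces part (ii).

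The principal obstacle I foresee is not algebraic but consists in the single-valuedness bookkeeping: every identity above must be read as an equality of bona fide elements of $X$, not merely as an inclusion of linear manifolds. This relies on the fact that $(\lambda-\mathcal{A})^{-1}C$ is single-valued on $X$ for $\lambda\in \rho_{C}(\mathcal{A})$, together with the observation (recorded in the paragraph preceding Proposition \ref{2.16-favini}) that $\operatorname{card}\bigl((\lambda-\mathcal{A})^{-n}Cx\bigr)\leq 1$ for $n\in \mathbb{N}$, which justifies the commutation $C(\lambda-\mathcal{A})^{-1}C=(\lambda-\mathcal{A})^{-1}C^{2}$ used throughout. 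Once this single-valuedness is in hand, the remaining work — tracking signs $(-1)^{k-i}$, the powers $(z-\lambda)^{-(k+1-i)}$, and the shift of summation bounds — is entirely mechanical.
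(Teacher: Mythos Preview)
Your proposal is correct and matches the paper's approach: the paper simply states that both formulae ``can be simply proved inductively'' with the help of Theorem~\ref{C-favini}, which is exactly the scheme you spell out (base case trivial, inductive step via the Hilbert resolvent identity of Theorem~\ref{C-favini}(ii) plus the commutation $C(\lambda-{\mathcal A})^{-1}C=(\lambda-{\mathcal A})^{-1}C^{2}$). Your reduction of part~(ii) to part~(i) through the auxiliary identity $\bigl((\lambda_{0}-{\mathcal A})^{-1}C\bigr)^{k}y=C^{k}x$ for $y\in(\lambda_{0}-{\mathcal A})^{k}x$ is a clean way to organize the induction and is consistent with the paper's terse indication.
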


Now we introduce the following definition.

\begin{defn}\label{c-nonnegativealm} (cf. \cite[Definition 2.9.4]{knjigaho} for single-valued case) Let ${\mathcal A}$  be a closed multivalued linear operator on $X.$
\begin{itemize}
\item[(i)] Then we say that ${\mathcal A}$ is $C$-nonnegative\index{operator!$C$-nonnegative} iff $(- \infty, 0) \subseteq \rho_C ({\mathcal A})$ and
the family
$$
\Bigl\{ \lambda \bigl(\lambda + {\mathcal A}\bigr)^{-1} C: \lambda > 0 \Bigr\}
$$
is equicontinuous; moreover, a $C$-nonnegative operator ${\mathcal A}$ is
called $C$-positive\index{operator!$C$-positive} iff, in addition, $0\in \rho_{C}({\mathcal A}).$
\item[(ii)] Let $0 \leq \omega < \pi$.
Then we say that ${\mathcal A}$ is $C$-sectorial\index{operator!$C$-sectorial} of angle $\omega,$ in short ${\mathcal A}\in
\emph{Sect}_{C}(\omega),$ iff $\mathbb{C} \setminus
\overline{\Sigma_{\omega}} \subseteq \rho_C ({\mathcal A})$ and the family
$$
\Bigl\{ \lambda \bigl(\lambda - {\mathcal A}\bigr)^{-1} C : \lambda \notin
\Sigma_{\omega '} \Bigr\}
$$
is equicontinuous for every $\omega < \omega ' < \pi;$ if this is
the case, then the $C$-spectral angle\index{$C$-spectral angle} of ${\mathcal A}$ is defined by
$\omega_{C}({\mathcal A}):=\inf\{\omega \in [0,\pi): {\mathcal A}\in
\emph{Sect}_{C}(\omega)\}.$
\end{itemize}
\end{defn}

We close this subsection by observing
that
some properties of $C$-nonnegative operators established in \cite[Proposition 2.4]{TAJW} continue to hold in multivalued linear case. For example,
we can prove that:
\begin{itemize}
\item[(i)] If $0 \in \rho (C)$, then ${\mathcal A}$ is $C$-nonnegative iff
${\mathcal A}$ is nonnegative.
\item[(ii)] If ${\mathcal A}$ is $C$-positive, then the family
$\{(\lambda + C) (\lambda + {\mathcal A})^{-1} C: \lambda > 0\}$ is
equicontinuous. Conversely, if the last family is equicontinuous and
$C$ is nonnegative, then ${\mathcal A}$ is $C$-nonnegative.
\item[(iii)] Let $X$ be barreled. Then the adjoint ${\mathcal A}^*$ of ${\mathcal A}$ is $C^{\ast}$-nonnegative in $X^{\ast}$.
\end{itemize}

\subsection{Fractionally integrated $C$-semigroups in locally convex spaces}\label{maxx}

In this subsection, we collect the basic facts and definitions about (degenerate) fractionally integrated $C$-semigroups in locally convex spaces.
The operator $C\in L(X)$ need not be injective.

\begin{defn}\label{2.1.1.1'} (\cite{catania})
Suppose that $0<\alpha <\infty$ and $0<\tau \leq \infty .$
Then a strongly continuous operator family $(S_\alpha(t))_{t\in [0,\tau)}\subseteq L(X)$ is called a (local, if $\tau<\infty$) $\alpha$-times integrated $C$-semigroup
iff the following holds:
\begin{itemize}
\item[(i)] $S_\alpha(t)C=CS_\alpha(t)$, $t\in [0,\tau),$ and
\item[(ii)] For all $x\in X$ and $t,\ s\in [0,\tau)$ with $t+s\in [0,\tau),$ we have
\begin{align*}
S_\alpha(t)S_\alpha(s)x=\Biggl[\int_0^{t+s}-\int_0^t-\int_0^s\Biggr]
g_{\alpha}(t+s-r)S_\alpha(r)Cx\,dr.
\end{align*}
\end{itemize}
\end{defn}

By a $C$-regularized semigroup ($0$-times integrated $C$-regularized semigroup) we mean any strongly continuous operator family $(S_0(t)\equiv S(t))_{t\in [0,\tau)}\subseteq L(X)$
such that $S(t)C=CS(t)$, $t\in [0,\tau)$ and $S(t+s)C=S(t)S(s)$ for all $t,\ s\in [0,\tau)$ with $t+s\in [0,\tau).$

Let $0\leq \alpha < \infty$. If $\tau=\infty ,$ then $(S_{\alpha}(t))_{t\geq 0}$ is said to be
exponentially equicontinuous\index{$(a,k)$-regularized $C$-resolvent family!exponentially equicontinuous} (equicontinuous\index{$(a,k)$-regularized $C$-resolvent family!equicontinuous}) iff there exists
$\omega \in {\mathbb R}$ ($\omega =0$) such that the family $\{
e^{-\omega t} S_{\alpha}(t) : t\geq 0\}$ is equicontinuous; $(S_{\alpha}(t))_{t\in [0,\tau)}$ is said to be
locally equicontinuous iff for every $\tau' \in (0,\tau)$ we have that the operator family $\{
S_{\alpha}(t) : t\in [0,\tau'] \}$ is equicontinuous.

The integral generator $\hat{{\mathcal A}}$ of $(S_{\alpha}(t))_{t\in [0,\tau)}$ is defined by its graph
\[
\hat{{\mathcal A}}:=\Biggl\{(x,y)\in X\times X:S_\alpha(t)x-g_{\alpha} (t)Cx=\int\limits_0^tS_\alpha(s)y\,ds,\; t\in [0,\tau) \Biggr\}.
\]
The integral generator $\hat{{\mathcal A}}$ of $(S_{\alpha}(t))_{t\in [0,\tau)}$ is a closed MLO in $X,$ provided that $(S_{\alpha}(t))_{t\in [0,\tau)}$ is locally equicontinuous.
Furthermore, $\hat{{\mathcal A}}\subseteq C^{-1}\hat{{\mathcal A}}C$ in the MLO sense, with the equality in the case that
the operator $C$ is injective.

By a subgenerator of $(S_{\alpha}(t))_{t\in [0,\tau)}$ we mean any MLO ${\mathcal A}$ in $X$ satisfying the following two conditions:
\begin{itemize}
\item[(A)] $S_{\alpha}(t)x-g_{\alpha+1} (t)Cx=\int_0^tS_{\alpha}(s)y\,ds,\mbox{ whenever }t\in [0,\tau)\mbox{ and }y\in {\mathcal A}x.$
\item[(B)] For all $x\in X$ and $t\in [0,\tau),$  we have $\int^{t}_{0}S_{\alpha}(s)x\, ds \in D({\mathcal A})$ and
$S_{\alpha}(t)x-g_{\alpha+1}(t)Cx\in {\mathcal A}\int_0^t S_{\alpha}(s)x\,ds.$
\end{itemize}
If $(S_{\alpha}^{1}(t))_{t\in [0,\tau)}\subseteq  L(X),$ resp. $(S_{\alpha}^{2}(t))_{t\in [0,\tau)}\subseteq  L(X),$ is strongly continuous and satisfies only (B), resp. (A), with $(S_{\alpha}(t))_{t\in [0,\tau)}$ replaced therein with $(S_{\alpha}^{1}(t))_{t\in [0,\tau)},$ resp. $(S_{\alpha}^{2}(t))_{t\in [0,\tau)}$, then we say that
$(S_{\alpha}^{1}(t))_{t\in [0,\tau)},$ resp. $(S_{\alpha}^{2}(t))_{t\in [0,\tau)},$ is an $\alpha$-times integrated $C$-existence family with a subgenerator ${\mathcal A},$ resp.,
$\alpha$-times integrated $C$-uniqueness family with a subgenerator ${\mathcal A}.$

The notion of an exponentially
equicontinuous, analytic $\alpha$-times integrated $C$-semigroup is introduced in the following definition.

\begin{defn}\label{jebai-ga-mloss}
\begin{itemize}
\item[(i)]
Let $\nu \in (0,\pi],$ and
let $(S_{\alpha}(t))_{t\geq 0}$ be an $\alpha$-times integrated $C$-semigroup. Then it is said that $(S_{\alpha}(t))_{t\geq 0}$ is an analytic
$\alpha$-times integrated $C$-semigroup of angle $\nu,$ if
there exists a function ${\bf S}_{\alpha} : \Sigma_{\nu} \rightarrow L(X)$
which satisfies that, for every $x\in X,$ the mapping $z\mapsto {\bf
S}_{\alpha}(z)x,$ $z\in \Sigma_{\nu}$ is analytic as well as that:
\begin{itemize}
\item[(a)] ${\bf S}_{\alpha}(t)=S_{\alpha}(t),\ t>0$ and
\item[(b)] $\lim_{z\rightarrow
0,z\in \Sigma _{\gamma }}{\bf S}_{\alpha}(z)x=\delta_{\alpha,0}Cx$ for
all $\gamma \in (0,\nu)$ and $x\in X.$
\end{itemize}
\item[(ii)] Let $(S_{\alpha}(t))_{t\geq 0}$ be an analytic $\alpha$-times integrated $C$-semigroup of angle $\nu \in (0,\pi].$
Then it is said that $(S_{\alpha}(t))_{t\geq 0}$ is an exponentially
equicontinuous, analytic $\alpha$-times integrated $C$-semigroup of angle $\nu,$
resp. equicontinuous analytic $\alpha$-times integrated $C$-semigroup of angle $\nu,$ if for every
$\gamma\in(0,\nu),$ there exists $ \omega_{\gamma} \geq 0,$ resp.
$\omega_{\gamma}=0,$ such that the family $\{e^{-\omega_{\gamma}\Re
z}{\bf S}_{\alpha}(z) : z\in \Sigma_{\gamma}\}\subseteq L(X)$ is equicontinuous.
\end{itemize}
\end{defn}

For more details about degenerate fractionally integrated $C$-semigroups, we refer the reader to \cite{catania}.

\section{Definition and main properties of complex powers of operators satisfying the condition (H)$_{0}$}

Assume that
the condition (H)$_{0},$ resp. (HS)$_{0},$ holds.
Without loss of generality, we may assume that there exists a number $\lambda_{0} \in \text{int}(\rho_{C}({\mathcal A})) \setminus (P_{\alpha, \varepsilon, c} \cup B_{d}),$ resp., $\lambda_{0} \in \text{int}(\rho_{C}({\mathcal A})) \setminus (\Sigma_{\vartheta} \cup B_{d}).$ Then we can prove inductively (cf. also Theorem \ref{gen-res-MLO}(i)) that, for
every $z\in \rho_{C}({\mathcal A}) \setminus \{\lambda_{0}\}:$
\begin{equation}\label{resequ-rundes}
\bigl(z-{\mathcal A}\bigr)^{-1}C\bigl(\lambda_{0}-{\mathcal A}\bigr)^{-k}Cx
=\frac{(-1)^{k}}{(z-\lambda_{0})^{k}}\bigl(z-{\mathcal A}\bigr)^{-1}C^{2}x +\sum
\limits^{k}_{i=1}\frac{(-1)^{k-i}\bigl(\lambda_{0}-{\mathcal A}\bigr)^{-i}C^{2}x}{\bigl(z-\lambda_{0}\bigr)^{k+1-i}}.
\end{equation}
Strictly speaking, for $k=1$ this is the usual resolvent equation. Suppose that (\ref{resequ-rundes}) holds for all natural numbers $\leq k.$ Then
(\ref{creso}) shows that
\begin{align*}
\bigl(z&-{\mathcal A}\bigr)^{-1}C\bigl(\lambda_{0}-{\mathcal A}\bigr)^{-(k+1)}Cx
\\ & =\bigl(z-{\mathcal A}\bigr)^{-1}C\frac{(-1)^{k}}{k!}\Biggl(\frac{d^{k}}{d\lambda^{k}}
\bigl(\lambda-{\mathcal A}\bigr)^{-1}Cx\Biggr)_{\lambda =\lambda_{0}}
\\ & =\bigl(z-{\mathcal A}\bigr)^{-1}C\frac{(-1)^{k}}{k!}\Biggl(\frac{d}{d\lambda}\Biggl[(-1)^{k-1}(k-1)!
\bigl(\lambda-{\mathcal A}\bigr)^{-k}Cx\Biggr]\Biggr)_{\lambda =\lambda_{0}}
\\ & =\frac{(-1)}{k}\Biggl(  \frac{d}{d\lambda}\Biggl[ \bigl(z-{\mathcal A}\bigr)^{-1}C\bigl(\lambda-{\mathcal A}\bigr)^{-k}Cx     \Biggr] \Biggr)_{\lambda=\lambda_{0}}
\end{align*}
and we can employ the inductive hypothesis and a simple computation involving the equality
\begin{align*}
\Biggl(\frac{d}{d\lambda} \Bigl[ \bigl(\lambda-{\mathcal A}\bigr)^{-i}C^{2}x\Bigr] \Biggr)_{\lambda=\lambda_{0}}
=(-i) \bigl(\lambda-{\mathcal A}\bigr)^{-i-1}C^{2}x,\quad x\in X,\ i \in {\mathbb N}_{0}
\end{align*}
in order to see that (\ref{resequ-rundes}) holds with $k$ replaced with $k+1$ therein.
Set
\begin{align}\label{marseljeza}
C_{1}:=C\bigl(\lambda_{0} -{\mathcal A}\bigr)^{-\lfloor \alpha+2 \rfloor}C ,\mbox{ if }\alpha>-1,\mbox{ and }
C_{1}:=C,\mbox{ if }\alpha=-1.
\end{align}
Then Theorem \ref{C-favini}(i) implies by iteration that $C_{1}$ commutes with ${\mathcal A}.$ Furthermore, the validity of (H)$_{0}$, resp. (HS)$_{0}$, implies by Theorem \ref{gen-res-MLO}(i) that the following holds:
\begin{itemize}
\item[(H):] There exist real numbers $d\in (0,1],$ $c\in (0,1)$ and $\varepsilon \in (0,1]$ such
that $P_{\alpha, \varepsilon, c} \cup B_{d} \subseteq \rho_{C_{1}}({\mathcal A}),$ the operator family $\{ (1+|\lambda|)^{-1}(\lambda - {\mathcal A})^{-1}C_{1} : \lambda \in P_{\alpha, \varepsilon, c} \cup B_{d}\} \subseteq L(X)$ is equicontinuous,
the mapping $\lambda \mapsto (\lambda - {\mathcal A})^{-1}C_{1}$ is strongly analytic on int$(P_{\alpha, \varepsilon, c} \cup B_{d})$ and strongly continuous on $\partial (P_{\alpha, \varepsilon, c} \cup B_{d}),$
\end{itemize}
resp.,
\begin{itemize}
\item[(HS):]
There exist real numbers $d\in (0,1]$ and $\vartheta \in (0,\pi/2)$ such
that $\Sigma_{\vartheta} \cup B_{d} \subseteq \rho_{C_{1}}({\mathcal A}),$ the operator family $\{ (1+|\lambda|)^{-1}(\lambda - {\mathcal A})^{-1}C_{1} : \lambda \in \Sigma_{\vartheta} \cup B_{d}\} \subseteq L(X)$ is equicontinuous,
the mapping $\lambda \mapsto (\lambda - {\mathcal A})^{-1}C_{1}$ is strongly analytic on int$(\Sigma_{\vartheta}\cup B_{d})$ and strongly continuous on $\partial (\Sigma_{\vartheta} \cup B_{d}).$
\end{itemize}

So, the condition (H), resp. (HS), holds and $C_{1}{\mathcal A} \subseteq {\mathcal A}C_{1};$ in particular,
$-{\mathcal A}$ is $C_{1}$-positive, resp. $-{\mathcal A},$ is $C_{1}$-sectorial of angle $\pi-\vartheta$ and $B_{d}\subseteq \rho_{C_{1}}({-\mathcal A}).$

Put $\Gamma_{1}(\alpha, \varepsilon, c,d):=\{\xi+i\eta : \xi \leq
-\varepsilon,\ \eta=-c(1+|\xi|)^{-\alpha}\},$ $\Gamma_{2}(\alpha,
\varepsilon, c,d):=\{\xi+i\eta : \xi^{2}+\eta^{2}=d^{2},\ \xi\geq
- \varepsilon\}$ and $\Gamma_{3}(\alpha, \varepsilon, c,d):=\{\xi+i\eta
: \xi \leq -\varepsilon,\ \eta=c(1+|\xi|)^{-\alpha}\}.$ The curve
$\Gamma(\alpha, \varepsilon, c,d):= \Gamma_{1}(\alpha, \varepsilon, c,d)
\cup \Gamma_{2}(\alpha, \varepsilon, c,d) \cup \Gamma_{3}(\alpha,
\varepsilon, c,d)$ is oriented so that $\hbox{Im}(\lambda)$ increases
along $\Gamma_{2}(\alpha, \varepsilon, c,d)$ and that $\hbox{Im}(\lambda)$ decreases
along $\Gamma_{1}(\alpha, \varepsilon, c,d)$ and $\Gamma_{3}(\alpha, \varepsilon, c,d).$
Since there is no risk
for confusion, we also write $\Gamma$ for $\Gamma(\alpha,
\varepsilon, c,d).$ We similarly define the curves $\Gamma_{1,S}(\vartheta, d),$ $\Gamma_{2,S}(\vartheta, d),$ $\Gamma_{3,S}(\vartheta, d)$ and $\Gamma_{S}(\vartheta, d)$ for $\vartheta \in (0,\pi/2)$ and $d\in (0,1].$

Define
\begin{align*}
f_{C_{1}} ({\mathcal A}) x := \frac{1}{2 \pi i} \int_{\Gamma} f (z)
\bigl(z + {\mathcal A}\bigr)^{-1} C_{1} x \, d z, \quad x \in X,
\end{align*}
where $f(z)$ is a
holomorphic function on an open neighborhood $\Omega_{\alpha, \varepsilon, c,d}$ of $-(P_{\alpha, \varepsilon, c} \cup B_{d}) \setminus (-\infty,0]$
and the estimate
\begin{eqnarray*}
\begin{split}
| f (z) | \leq M |z|^{-s}, \quad z \in \Omega_{\alpha, \varepsilon, c,d}
\end{split}
\end{eqnarray*}
holds for some positive number $s>0.$ Denote by ${\mathcal H}$ the class consisting of such functions.
Then an application of Cauchy's theorem
shows that the definition of $f_{C_{1}} ({\mathcal A}) $ does not depend on a particular choice of curve
$\Gamma (\alpha, \varepsilon, c,d)$ (with the meaning clear). Furthermore, a standard calculus involving the Cauchy theorem, the Fubini theorem and 
Theorem \ref{C-favini}(ii) shows that
\begin{eqnarray}\label{e:homomorphism}
\begin{split}
f_{C_{1}} ({\mathcal A}) g_{C_{1}} ({\mathcal A}) = \bigl(f g\bigr)_{C_{1}} ({\mathcal A}) C_{1}, \quad f,\ g,\ f g \in
{\mathcal H}.
\end{split}
\end{eqnarray}

Given $b\in {\mathbb C}$ with $\Re b>0,$ set $(-{\mathcal A})_{C_{1}}^{-b} := (z^{-b})_{C_{1}} ({\mathcal A})$ and
$(-{\mathcal A})_{C_{1}}^{-0} := C_{1}$. By Remark \ref{univ-prof}(ii) and the residue theorem, we get $(-{\mathcal A})_{C_{1}}^{-n} = (-{\mathcal A})^{-n} C_{1}$ ($n \in
\mathbb{N}$); moreover, $(-{\mathcal A})_{C_{1}}^{-b}C_{1}=C_{1}(-{\mathcal A})_{C_{1}}^{-b}$ ($\Re b> 0$), the mapping
$b\mapsto (-{\mathcal A})_{C_{1}}^{-b}x,$ $\Re b>0$ is analytic for every fixed $x\in
X,$ and the following holds:
$$
\frac{d}{db}(-{\mathcal A})_{C_{1}}^{-b}x=\frac{(-1)}{2\pi i}\int_{\Gamma}
(\ln z) z^{-b}\bigl(z + {\mathcal A}\bigr)^{-1} C_{1} x \, d z, \quad x \in X,\ \Re
b>0.
$$
Applying the equality (\ref{e:homomorphism}) once more, we get that
$$
(-{\mathcal A})_{C_{1}}^{-b_1} (-{\mathcal A})_{C_{1}}^{-b_2} = (-{\mathcal A})_{C_{1}}^{- (b_1 + b_2)} C_{1}, \quad \Re b_1,\  \Re
b_2 > 0.
$$
It is very simple to prove that
\begin{align*} 
(-{\mathcal A})_{C_{1}}^{-b}x
= & -\frac{\sin \pi b}{\pi} \int_0^\infty \lambda^{-b} \bigl(\lambda
- {\mathcal A}\bigr)^{-1} C_{1} x \, d \lambda, \quad 0 < \Re b < 1,\quad x\in X,
\end{align*}
so that the family $\{ (-{\mathcal A})_{C_{1}}^{-b} :
0<b<1 \}$ is equicontinuous.
Define now the powers with negative imaginary part of exponent
\index{fractional powers!with negative imaginary part of exponent}
by
$$
\bigl(-{\mathcal A}\bigr)_{-b} := C_{1}^{-1} \bigl(-{\mathcal A}\bigr)_{C_{1}}^{-b}, \quad \Re b > 0.
$$
Then $(-{\mathcal A})_{-b}$ is a closed MLO and $(-{\mathcal A})_{-n} = C_{1}^{-1} (-{\mathcal A})^{-n} C_{1}$ ($n \in
\mathbb{N}$). We define the
powers with positive imaginary part of exponent\index{fractional powers!with positive imaginary part of exponent} by
$$
\bigl(-{\mathcal A}\bigr)_{b} := \bigl((-{\mathcal A})_{-b}\bigr)^{-1} = \bigl((-{\mathcal A})_{C_{1}}^{-b}\bigr)^{-1} C_{1},
\quad \Re b > 0.
$$
Clearly, $(-{\mathcal A})_{n} = C_{1}^{-1} (-{\mathcal A})^{n} C_{1}$ for every $n \in \mathbb{N}$, and
$(-{\mathcal A})_b$ is a closed MLO due to the fact that $(-{\mathcal A})_{-b}$ is a closed MLO ($b\in {\mathbb C}_{+}$).
Following \cite[Definition 7.1.2]{martinez} and our previous analyses of non-degenerate case \cite{TAJW}, we introduce the purely
imaginary powers\index{fractional powers!purely
imaginary} of $-{\mathcal A}$ as follows: Let $\tau \in {\mathbb R} \setminus \{0\}.$
Then the power $(-{\mathcal A})_{i\tau}$ is defined by
$$
\bigl(-{\mathcal A}\bigr)_{i\tau}:=C_{1}^{-2}\bigl(1-{\mathcal A}\bigr)_{2}(-{\mathcal A})_{-1}(-{\mathcal A})_{1+i\tau}\bigl(1-{\mathcal A}\bigr)_{-2}C_{1}^{2},
$$
where $(1-{\mathcal A})_{2}=C_{1}^{-1}(1-{\mathcal A})^{2}C_{1}$ and $(1-{\mathcal A})_{-2}=C_{1}^{-1}(1-{\mathcal A})^{-2}C_{1}.$
We will later see (cf. (S.4)) that $C_{1}(D({\mathcal A}^{2}))\subseteq D((-{\mathcal A})_{1+i\tau}),$ so that the closedness of
$(-{\mathcal A})_{i\tau}$ follows from a simple computation involving the closedness of $(-{\mathcal A})_{1+i\tau}.$

Further on, the Cauchy integral formula and (\ref{creso}) implies that the operator family $\{\lambda^{k}(\lambda-{\mathcal A})^{-k}C_{1} : \lambda >0\}\subseteq L(X)$ is equicontinuous for all $k\in {\mathbb N}.$ If $y\in {\mathcal A}^{k}x$ for some $k\in {\mathbb N}$ and $x\in D({\mathcal A}^{k}),$ then there exists a sequence $(y_{j})_{1\leq j\leq k}$ in  $X$ such that $y_{k}=y$ and $(x,y_{1})\in {\mathcal A},$ $(y_{1},y_{2})\in {\mathcal A},\cdot \cdot \cdot ,$ $(y_{k-1},y_{k})\in {\mathcal A}.$ Then we can inductively prove with the help of Theorem \ref{C-favini}(i) that
\begin{align*}
\lambda^{k}\bigl(\lambda-{\mathcal A}\bigr)^{-k}C_{1}x=C_{1}x+\sum \limits_{j=1}^{k}\binom{k}{j}\bigl(\lambda-{\mathcal A}\bigr)^{-j}
C_{1}y_{j},\quad \lambda>0,
\end{align*}
which implies that $\lim_{\lambda \rightarrow +\infty}\lambda^{k}\bigl(\lambda-{\mathcal A}\bigr)^{-k}C_{1}x=C_{1}x,$  $k\in {\mathbb N},$ $x\in D({\mathcal A}^{k});$
 cf. \cite[Lemma 2.7]{TAJW}. The assertion of \cite[Lemma 2.5]{TAJW} also holds in our framework.

Now we will reconsider multivalued analogues of some statements established in \cite[Theorem 2.8, Theorem 2.10, Lemma 2.14]{TAJW}.
\begin{itemize}
\item[(S.1)] Suppose $\Re b\neq 0.$ Then it is checked at once that $(-{\mathcal A})_{b} \subseteq C_{1}^{-1}(-{\mathcal A})_{b} C_{1},$
with the equality in the case that the operator $C_{1}$ is injective.
\item[(S.2)] Suppose $\Re b_{1} <0$ and $\Re b_{2} <0.$ Then $R(C_{1})\subseteq D((-{\mathcal A})_{z}),$ $\Re z<0,$
\begin{align*}
\bigl(-{\mathcal A}\bigr)_{C_{1}}^{b_{1}+b_{2}}x& \in C_{1}^{-1}\bigl(-{\mathcal A}\bigr)_{C_{1}}^{b_{1}+b_{2}}C_{1}x
=C_{1}^{-1}\bigl(-{\mathcal A}\bigr)_{C_{1}}^{b_{1}}\bigl(-{\mathcal A}\bigr)_{C_{1}}^{b_{2}}x\\& \subseteq
C_{1}^{-1}\bigl(-{\mathcal A}\bigr)_{C_{1}}^{b_{1}}C_{1}^{-1}\bigl(-{\mathcal A}\bigr)_{C_{1}}^{b_{2}}C_{1}x=\bigl(-{\mathcal A}\bigr)_{b_{1}}\bigl(-{\mathcal A}\bigr)_{b_{2}}C_{1}x,\quad x\in X.
\end{align*}
This, in turn, implies $(-{\mathcal A})_{C_{1}}^{b_{1}+b_{2}}\subseteq (-{\mathcal A})_{b_{1}}(-{\mathcal A})_{b_{2}}C_{1},$
$C_{1}^{-1}(-{\mathcal A})_{C_{1}}^{b_{1}+b_{2}}\subseteq C_{1}^{-1}(-{\mathcal A})_{b_{1}}(-{\mathcal A})_{b_{2}}C_{1}$ and
\begin{align}\label{seksi-govno}
\bigl(-{\mathcal A}\bigr)_{b_{1}+b_{2}} \subseteq C_{1}^{-1}\bigl(-{\mathcal A}\bigr)_{b_{1}}\bigl(-{\mathcal A}\bigr)_{b_{2}}C_{1}.
\end{align}
Let $y\in (-{\mathcal A})_{b_{1}}(-{\mathcal A})_{b_{2}}x.$ Thus, $C_{1}y\in
 (-{\mathcal A})_{C_{1}}^{b_{1}}C_{1}^{-1}(-{\mathcal A})_{C_{1}}^{b_{2}}x.$ This yields the existence of an element
$u\in C_{1}^{-1}(-{\mathcal A})_{C_{1}}^{b_{2}}x$ such that $C_{1}z=(-{\mathcal A})_{C_{1}}^{b_{2}}x$ and $C_{1}y=(-{\mathcal A})_{C_{1}}^{b_{1}}u.$ So, $C_{1}^{2}y=(-{\mathcal A})_{C_{1}}^{b_{1}}C_{1}u=(-{\mathcal A})_{C_{1}}^{b_{1}}(-{\mathcal A})_{C_{1}}^{b_{2}}C_{1}x,$ $C_{1}y\in C_{1}^{-1}(-{\mathcal A})_{C_{1}}^{b_{1}+b_{2}}C_{1}x=(-{\mathcal A})_{b_{1}+b_{2}}C_{1}x
$ and $y\in C_{1}^{-1}(-{\mathcal A})_{b_{1}+b_{2}}C_{1}x.$ Hence,
\begin{align}\label{seksi-govno-franc}
\bigl(-{\mathcal A}\bigr)_{b_{1}}\bigl(-{\mathcal A}\bigr)_{b_{2}} \subseteq C_{1}^{-1}\bigl(-{\mathcal A}\bigr)_{b_{1}+b_{2}} C_{1}.
\end{align}
\item[(S.3)]
Suppose now that $\Re b_{1} >0$ and $\Re b_{2} >0.$ Using the equations (\ref{seksi-govno})-(\ref{seksi-govno-franc}) with $b_{1}$ and $b_{2}$ replaced respectively by $-b_{1}$ and $-b_{2}$ therein, and taking the inverses after that, it readily follows from (S.2)  that (\ref{seksi-govno})-(\ref{seksi-govno-franc}) holds in this case.
\item[(S.4)] Repeating almost literally the arguments from the proof
of \cite[Theorem 2.8(ii.2)]{TAJW}, we can deduce the following: Suppose that $\Re b>0$ and $k =\lceil \Re b \rceil,$ resp. $k=\lceil \Re b \rceil+1,$ provided that $\Re b \notin {\mathbb N},$
resp. $\Re b \in {\mathbb N}.$ Let $x=C_{1}y$ for some $y\in D({\mathcal A}^{k}).$ Then there exists a sequence $(y_{j})_{1\leq j \leq k}$ in $X$
such that $(y,y_{1})\in {\mathcal A},$ $(y_{1},y_{2})\in {\mathcal A}, \cdot \cdot \cdot,$ $(y_{k-1},y_{k})\in {\mathcal A}.$ Furthermore,
$C_{1}(D({\mathcal A}^{k}))\subseteq D((-{\mathcal A})_{b})$ and, for every such a sequence, we have
\begin{align*}
\frac{1}{2\pi i}\int
\limits_{\Gamma}z^{b-\lfloor \Re b\rfloor-1}\bigl(z +{\mathcal A}
\bigr)^{-1}C_{1}y_{k}\,dz \in \bigl(-{\mathcal A}\bigr)_{b}x.
\end{align*}
\item[(S.5)] The assertion of \cite[Theorem 2.8(iii)]{TAJW} is not really interested in multivalued case because $(-{\mathcal A})_{b}x$ is not singleton, in general.
\item[(S.6)] Let $\tau \in {\mathbb R}.$ Then a straightforward computation involving (S.1) shows that $(-{\mathcal A})_{i\tau} \subseteq C_{1}^{-1}(-{\mathcal A})_{i\tau} C_{1}.$
The equality $(-{\mathcal A})_{i\tau} =C_{1}^{-1}(-{\mathcal A})_{i\tau} C_{1}$ can be also trivially verified provided
that the operator $C_{1}$ is injective.
\item[(S.7)] Let $x=C_{1}y$ for some $y\in D({\mathcal A}),$ and let $\tau \in {\mathbb R}.$ Keeping in mind (\ref{resequ-rundes}), (\ref{e:homomorphism}), (S.4), the residue theorem and Theorem \ref{integracija-tricky}, we can prove as in single-valued linear case that:
$$
\frac{1}{2\pi
i}\int
\limits_{\Gamma}z^{-1+i\tau}\frac{z}{z+1}\bigl(z+{\mathcal A}\bigr)^{-1}C_{1}^{2}x\, dz
\in (1-{\mathcal A})(-{\mathcal A})_{-1}(-{\mathcal A})_{1+i\tau}\bigl(1-{\mathcal A}\bigr)_{-2}C_{1}^{2}x.
$$
Let $u\in (1-{\mathcal A})y.$
Using Lemma \ref{integracija-tricky} and Theorem \ref{C-favini}(i), we get from the above that
\begin{align*}
C_{1}^{-3}(1-{\mathcal A}) & C_{1}\Biggl[ \frac{1}{2\pi
i}\int
\limits_{\Gamma}z^{-1+i\tau}\frac{z}{z+1}\bigl(z+{\mathcal A}\bigr)^{-1}C_{1}^{2}x\, dz \Biggr]
\\ & =\frac{1}{2\pi
i}\int
\limits_{\Gamma}z^{-1+i\tau}\frac{z}{z+1}\bigl(z+{\mathcal A}\bigr)^{-1}C_{1}u\, dz \in (-{\mathcal A})_{i\tau}x,
\end{align*}
so that $C_{1}(D({\mathcal A}))\subseteq D((-{\mathcal A})_{i\tau}).$ Unfortunately, a great number of important properties of purely imaginary powers established in \cite[Theorem 2.10]{TAJW} does not continue to hold in multivalued linear case.
\item[(S.8)] Let $n\in {{\mathbb N}_{0}},$ let $b\in {\mathbb C}$ and let $\Re
b\in (0,n+1) \setminus {\mathbb N}.$ Set $(1-b)(2-b)\cdot \cdot \cdot (n-b):=1$ for $n=0.$ Then, for every $x\in X,$ we have
$$
C_{1}^{n}(-{\mathcal A})_{C_{1}}^{-b}x=\frac{(-1)^{n}n!}{(1-b)(2-b)\cdot \cdot \cdot
(n-b)}\frac{\sin \pi (n-b)}{\pi}\int
\limits^{\infty}_{0}t^{n-b}\bigl(t-{\mathcal A}\bigr)^{-(n+1)}C_{1}^{n+1}x\,dt.
$$
This can be shown following the lines of the
proof of \cite[Theorem 5.27, p. 138]{eng}.
\end{itemize}

\section{The existence and uniqueness of solutions of abstract incomplete differential inclusions}

In this section, we assume that the condition (HS)$_{0}$ holds. Define $C_{1}$ through (\ref{marseljeza}). Then (HS) holds
and we can define the fractional powers of $-{\mathcal A}$ as it has been done in the third section of paper.

Following A. V. Balakrishnan \cite{balak}, define
\begin{align*}
f_{t}(\lambda)& :=\frac{1}{\pi}e^{-t \lambda^{\gamma}\cos \pi
\gamma}\sin \bigl( t\lambda^{\gamma}\sin \pi \gamma
\bigr)\\ &=\frac{1}{2\pi i}\Bigl( e^{-t
\lambda^{\gamma}e^{-i \pi \gamma }}-e^{-t \lambda^{\gamma}e^{i \pi
\gamma }}\Bigr),\quad t>0,\ \lambda>0.
\end{align*}
This function enjoys the following properties:
\begin{itemize}
\item[Q1.] $|f_{t}(\lambda)|\leq
\pi^{-1}e^{-\lambda^{\gamma}\epsilon_{t}},$ $\lambda>0,$ where
$\epsilon_{t}:=t\cos \pi \gamma >0.$
\item[Q2.] $|f_{t}(\lambda)|\leq
\gamma t\lambda^{\gamma}e^{-t \lambda^{\gamma}\sin \epsilon_{t}},$
$\lambda>0.$
\item[Q3.] $\int^{\infty}_{0}\lambda^{n}f_{t}(\lambda)\,
d\lambda=0,\ n\in {{\mathbb N}_{0}},\ t>0.$
\item[Q4.] Let $m\geq -1.$ Then Q1./Q2. together imply that the improper integral
$\int^{\infty}_{0}\lambda^{n}f_{t}(\lambda)( \lambda -{\mathcal A})^{-1}C_{1}\cdot
\, d\lambda$ is absolutely convergent and defines a bounded linear
operator on $ X$ ($n\in {{\mathbb N}_{0}}$).
\end{itemize}
Put now, for $0<\gamma<1/2$,
\begin{align*}
S_{\gamma}(t)x:=\int \limits^{\infty}_{0}f_{t}(\lambda) \bigl(
\lambda - {\mathcal A} \bigr)^{-1}C_{1}x\, d\lambda,\quad t>0,\ x\in X.
\end{align*}
Then $S_{\gamma}(t)\in L(X),$ $t>0$ and the following holds:

\begin{lem}\label{funk-racun-mlo}
We have
\begin{align}\label{djezine}
S_{\gamma}(t)=\bigl (e^{-tz^{\gamma}}\bigr)_{C_{1}}({\mathcal A}),\quad t>0,\ 0<\gamma<1/2.
\end{align}
Furthermore, $
S_{\gamma}(t)$ can be defined by \emph{(\ref{djezine})} for all $t  \in \Sigma_{(\pi /2)-\gamma \pi},$ and the mapping
$t\mapsto S_{\gamma}(t),$ $ t  \in \Sigma_{(\pi /2)-\gamma \pi}$ is strongly analytic ($0<\gamma<1/2$).
\end{lem}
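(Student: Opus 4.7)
The plan is to verify the identity (\ref{djezine}) by deforming the contour $\Gamma_{S}(\vartheta,d)$ that defines the Dunford integral $(e^{-tz^{\gamma}})_{C_{1}}({\mathcal A})$ into a keyhole contour hugging the branch cut $(-\infty,0]$ of $z^{\gamma}$, and then recognising the resulting integral along the positive real axis as the Balakrishnan formula defining $S_{\gamma}(t)$. As a preliminary, I would check that the integral $\frac{1}{2\pi i}\int_{\Gamma_{S}(\vartheta,d)}e^{-tz^{\gamma}}(z+{\mathcal A})^{-1}C_{1}x\,dz$ makes sense even though $f(z)=e^{-tz^{\gamma}}$ does not strictly belong to the class ${\mathcal H}$ (it fails to decay at the origin). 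On $\Gamma_{S}(\vartheta,d)$ one has $|\arg z|\leq\pi$, and since $\gamma<1/2$ the quantity $\cos(\gamma\arg z)$ is bounded below by $\cos(\gamma\pi)>0$, which gives the exponential decay $|e^{-tz^{\gamma}}|\leq e^{-t|z|^{\gamma}\cos(\gamma\pi)}$ at infinity; together with the equicontinuous bound $(1+|z|)^{-1}$ on $(z-{\mathcal A})^{-1}C_{1}$ coming from (HS), this yields absolute convergence, while boundedness of the integrand near $z=0$ is inherited from $0\in B_{d}\subseteq\rho_{C_{1}}({\mathcal A})$.

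The main step is Cauchy's theorem: inside the region $-(\Sigma_{\vartheta}\cup B_{d})\setminus(-\infty,0]$ the integrand is jointly analytic, so I may deform $\Gamma_{S}(\vartheta,d)$ to the limiting keyhole $K_{\delta}$ consisting of two straight segments at $\arg z=\pm\pi$ running from $|z|=\infty$ inward to $|z|=\delta$ together with a small circular arc of radius $\delta<d$ about the origin. Parametrising the banks by $z=\lambda e^{\mp i\pi}$ (so $z=-\lambda$, approached from below and above respectively), I would use $z^{\gamma}=\lambda^{\gamma}e^{\mp i\pi\gamma}$ and the elementary identity $(-\lambda+{\mathcal A})^{-1}C_{1}=-(\lambda-{\mathcal A})^{-1}C_{1}$ (valid for $\lambda\in\Sigma_{\vartheta}\subseteq\rho_{C_{1}}({\mathcal A})$ because the $C_{1}$-resolvent is single-valued and linear) to express both bank contributions through $(\lambda-{\mathcal A})^{-1}C_{1}x$. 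The alternative formula $f_{t}(\lambda)=\frac{1}{2\pi i}(e^{-t\lambda^{\gamma}e^{-i\pi\gamma}}-e^{-t\lambda^{\gamma}e^{i\pi\gamma}})$ then matches exactly the combined bank integrand, producing $\int_{\delta}^{\infty}f_{t}(\lambda)(\lambda-{\mathcal A})^{-1}C_{1}x\,d\lambda$. The small arc contributes $O(\delta)$, since $(z+{\mathcal A})^{-1}C_{1}x$ extends continuously to $z=0$ (using $0\in B_{d}\subseteq\rho_{C_{1}}({\mathcal A})$) while $|e^{-tz^{\gamma}}|$ stays bounded on $|z|=\delta$; sending $\delta\to 0$ yields (\ref{djezine}).

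For the extension of $S_{\gamma}(t)$ to $t\in\Sigma_{(\pi/2)-\gamma\pi}$, I would read (\ref{djezine}) as defining $S_{\gamma}(t)$ by the Dunford integral on the right-hand side. For $t=|t|e^{i\phi}$ with $|\phi|<(\pi/2)-\gamma\pi$, I can choose a contour $\Gamma_{S}(\vartheta',d)$ with $\vartheta'>0$ small enough that $\phi+\gamma\arg z$ stays in $(-\pi/2,\pi/2)$ uniformly on the contour, so that $\mathrm{Re}(tz^{\gamma})\geq c(t)|z|^{\gamma}$ for some $c(t)>0$. This gives absolute convergence, uniform on compact subsets of $\Sigma_{(\pi/2)-\gamma\pi}$, and differentiation under the integral sign (justified by the same uniform bound together with the analyticity of $t\mapsto e^{-tz^{\gamma}}$) upgrades $t\mapsto S_{\gamma}(t)$ to a strongly analytic $L(X)$-valued function on $\Sigma_{(\pi/2)-\gamma\pi}$. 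The main obstacle is the bookkeeping: the orientations of $\Gamma_{S}(\vartheta,d)$ and of the two banks, combined with the sign coming from $(-\lambda+{\mathcal A})^{-1}C_{1}=-(\lambda-{\mathcal A})^{-1}C_{1}$, must be tracked carefully, but the multivaluedness of ${\mathcal A}$ causes no extra difficulty, since all operators actually appearing under the integrals, namely $(z+{\mathcal A})^{-1}C_{1}$ and $(\lambda-{\mathcal A})^{-1}C_{1}$, are genuine single-valued bounded operators on $X$.
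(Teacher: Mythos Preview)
Your proposal is correct and follows essentially the same route as the paper: both arguments establish (\ref{djezine}) by deforming the Dunford contour $\Gamma_{S}(\vartheta,d)$ onto the two banks of the negative real axis, where the jump of $z^{\gamma}$ across the cut reproduces exactly the Balakrishnan kernel $f_{t}(\lambda)=\frac{1}{2\pi i}(e^{-t\lambda^{\gamma}e^{-i\pi\gamma}}-e^{-t\lambda^{\gamma}e^{i\pi\gamma}})$. The paper simply records the estimate
\[
\bigl|e^{-tz^{\gamma}}\bigr|\leq e^{-|z|^{\gamma}t_{1}\cos(\gamma\arg z)\,[1-|\tan(\arg t)|\tan(\gamma\arg z)]}
\]
valid for all $t=t_{1}+it_{2}\in\Sigma_{(\pi/2)-\gamma\pi}$ at once, which handles both the convergence of the Dunford integral and the deformation in a single stroke, and then leaves the routine verification to the reader; you instead treat real $t$ first (via the cruder bound $|e^{-tz^{\gamma}}|\leq e^{-t|z|^{\gamma}\cos\gamma\pi}$) and then adjust the contour for complex $t$. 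Your keyhole argument with the explicit $O(\delta)$ small-arc estimate and the orientation bookkeeping is more detailed than what the paper spells out, but the underlying idea is identical.

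One small remark: your phrase ``choose $\vartheta'>0$ small enough'' is harmless but unnecessary. For any $\vartheta'\in(0,\vartheta)$ and any $t\in\Sigma_{(\pi/2)-\gamma\pi}$ one already has $|\arg t+\gamma\arg z|\leq |\arg t|+\gamma(\pi-\vartheta')<\pi/2-\gamma\vartheta'<\pi/2$ on the rays, so no adjustment of the contour is needed to secure $\Re(tz^{\gamma})\geq c(t)|z|^{\gamma}$.
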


\begin{proof}
Observe that, for every $ t=t_{1}+it_{2}  \in \Sigma_{(\pi /2)-\gamma \pi}$ and $ z \in {\mathbb C} \setminus \{0\}, $
we have
\begin{align*}
\Bigl| e^{-tz^{\gamma}}\Bigr | \leq e^{-|z|^{\gamma}t_{1}\cos (\gamma \arg (z))[1-|\tan (\arg (t))|\tan (\gamma \arg (z))]}.
\end{align*}
Keeping this estimate in mind, it is very simple to deform the path of integration $\Gamma_{S}(\vartheta, d)$ into the negative real axis, showing that for each $t  \in \Sigma_{(\pi /2)-\gamma \pi}$ and $ x \in X$ we have:
\begin{align*}
\frac{1}{2 \pi i} \int_{\Gamma_{S}(\vartheta, d)} & e^{-t\lambda^{\gamma}}
\bigl(\lambda + {\mathcal A}\bigr)^{-1} C_{1} x \, d z
\\ & =\frac{1}{2\pi i}\int \limits^{\infty}_{0}\Bigl( e^{-t
\lambda^{\gamma}e^{-i \pi \gamma }}-e^{-t \lambda^{\gamma}e^{i \pi
\gamma }}\Bigr)\bigl(
\lambda - {\mathcal A} \bigr)^{-1}C_{1}x\, d\lambda .
\end{align*}
The remaining part of proof is left to the interested reader.
\end{proof}

Set
\begin{align*}
\varphi_{\gamma}:=(\pi /2)-\gamma (\pi-\vartheta),\ \mbox{ for }0< \gamma \leq 1/2.
\end{align*}

\begin{thm}\label{thm-mlo-fgrp}
Put
$S_{\gamma}(0):=C_{1},$
$S_{\gamma,\zeta}(t)x:=\int^{t}_{0}g_{\zeta}(t-s)S_{\gamma}(s)x\,
ds,$ $x\in X,$ $t\in \Sigma_{(\pi /2)-\gamma \pi}$ ($\zeta>0$), and
$S_{\gamma,0}(t):=S_{\gamma}(t),$ $t\in \Sigma_{(\pi /2)-\gamma \pi}.$ Then the family
$\{S_{\gamma}(t) : t>0\}$ is
equicontinuous, and there exist strongly analytic operator families $({\bf S}_{\gamma}(t))_{t\in \Sigma_{\varphi_{\gamma}}}$ and
$({\bf S}_{\gamma,\zeta}(t))_{t\in \Sigma_{\varphi_{\gamma}}}$ such that
${\bf S}_{\gamma}(t)=S_{\gamma}(t),$ $ t>0$
and ${\bf S}_{\gamma,\zeta}(t)=S_{\gamma,\zeta}(t),$ $t>0.$
Furthermore, the following holds:
\begin{itemize}
\item[(i)] ${\bf S}_{\gamma}(t_{1}){\bf S}_{\gamma}(t_{2})=
{\bf S}_{\gamma}(t_{1}+t_{2})C_{1}$ for all $t_{1},\ t_{2}\in \Sigma_{\varphi_{\gamma}}.$
\item[(ii)] We have
$\lim_{t\rightarrow 0,t\in \Sigma_{\varphi_{\gamma}-\epsilon}}{\bf S}_{\gamma}(t)x=C_{1}x,$ $x\in \overline{D({\mathcal A})},$ $\epsilon
\in (0,\varphi_{\gamma}).$
\item[(iii)]
${\bf S}_{\gamma}(z)(-{\mathcal A})_{\nu}\subseteq
(-{\mathcal A})_{\nu}{\bf S}_{\gamma}(z),$ $z\in \Sigma_{\varphi_{\gamma}},$
$\nu \in {{\mathbb C}_{+}}.$
\item[(iv)] If $D({\mathcal A})$ is dense in $X,$ then $(S_{\gamma}(t))_{t\geq 0}$ is an equicontinuous analytic
$C_{1}$-regularized semigroup of angle $\varphi_{\gamma}.$ Moreover, $(S_{\gamma}(t))_{t\geq 0}$ is a
$C_{1}$-regularized existence family
with a subgenerator
$-(-{\mathcal A})_{\gamma}$
and the supposition $(x,y) \in -(-{\mathcal A})_{\gamma}$ implies $(C_{1}x,C_{1}y) \in \hat{{\mathcal A}}_{\gamma},$ where
$
\hat{{\mathcal A}}_{\gamma}$ is the integral generator of $(S_{\gamma}(t))_{t\geq 0};$
otherwise, for every $\zeta>0,$ $(S_{\gamma,\zeta}(t))_{t\geq 0}$ is an exponentially equicontinuous, analytic $\zeta$-times integrated
$C_{1}$-regularized semigroup,\\ $(S_{\gamma,\zeta}(t))_{t\geq 0}$ is a $\zeta$-times integrated $C_{1}$-existence family
with a subgenerator
$-(-{\mathcal A})_{\gamma}$ and  the supposition $(x,y) \in -(-{\mathcal A})_{\gamma}$ implies $(C_{1}x,C_{1}y) \in \hat{{\mathcal A}}_{\gamma}.$
\item[(v)] For every $x\in X,\
t\in \Sigma_{(\pi /2)-\gamma \pi}$ and $n\in {\mathbb N},$ we have
\begin{align}\label{qwer-zorbinjo}
\Biggl(S_{\gamma}(t)x,-\int^{\infty}_{0}\lambda^{n}
f_{t}(\lambda)\bigl(\lambda-{\mathcal A}\bigr)^{-1}C_{1}x\, d\lambda\Biggr)\in {\mathcal A}^{n}.
\end{align}
\item[(vi)] Suppose $\beta>0.$
Denote by $\Omega_{\theta,\gamma},$ resp. $\Psi_{\gamma},$
 the continuity set of $(S_{\gamma}(te^{i\theta}))_{t>0},$ resp.
$(S_{\gamma}(t))_{t\in \Sigma_{\varphi_{\gamma}}}.$ Then, for
every $x\in \Omega_{\theta,\gamma},$ the incomplete abstract Cauchy
inclusion\index{incomplete abstract Cauchy
inclusion} \index{fractional derivatives!Liouville right-sided}
\[ (FP_{\beta}) : \left\{
\begin{array}{l}
u\in C^{\infty}\bigl((0,\infty ): X\bigr), \\
D^{\beta}_{-}u(t)\in e^{i\theta \beta} (-{\mathcal A})_{\gamma \beta} u(t),\ t>0, \\
\lim_{t\rightarrow 0+}u(t)=C_{1}x,\\
\mbox{the set } \{ u(t) : t>0\} \mbox{ is bounded in }X,
\end{array}%
\right.
\]
has a solution $u(t)=S_{\gamma}(te^{i\theta})x,$ $t>0,$ which can be
analytically extended to the sector $\Sigma_{\varphi_{\gamma}-|\theta|}.$ If, additionally, $x\in \Psi_{\gamma},$ then for
every $\delta \in (0,\varphi_{\gamma})$ and $j\in
{\mathbb{N}}_{0},$ we have that the set $\{z^{j}u^{(j)}(z) : z\in
\Sigma_{\delta}\} $ is bounded in $X.$
\end{itemize}
\end{thm}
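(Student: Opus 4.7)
The natural strategy is to view $S_{\gamma}(t)$ as $(e^{-tz^{\gamma}})_{C_{1}}({\mathcal A})$ through Lemma \ref{funk-racun-mlo} and to exploit the multiplicative identity (\ref{e:homomorphism}) of the functional calculus together with the single-valued prototype in \cite[Theorem 2.8--Theorem 2.10]{TAJW}. The principal complication is that every identity must be read modulo the regularizer $C_{1}$, since $(-{\mathcal A})_{\gamma}$ inherits a genuinely multi-valued behaviour once $C_{1}$ is not injective, so that Lemma \ref{integracija-tricky} will have to be invoked repeatedly in order to pull integrals inside closed MLOs.

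\textbf{Extension to the sector and equicontinuity.} I would first deform the contour $\Gamma_{S}(\vartheta,d)$ of (\ref{djezine}) by rotating it through $\arg t$; the rotation stays admissible precisely when $|\arg t|<\varphi_{\gamma}=(\pi/2)-\gamma(\pi-\vartheta)$. Combining the estimates Q1--Q2 with the equicontinuity of $\{(1+|\lambda|)^{-1}(\lambda-{\mathcal A})^{-1}C_{1}\}$ from (HS), I obtain that $\{{\bf S}_{\gamma}(t):t\in\Sigma_{\varphi_{\gamma}-\epsilon}\}$ is equicontinuous for every $\epsilon>0$ and that $t\mapsto{\bf S}_{\gamma}(t)$ is strongly analytic on $\Sigma_{\varphi_{\gamma}}$. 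Analyticity of $({\bf S}_{\gamma,\zeta}(t))$ then follows from convolution with $g_{\zeta}$.

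\textbf{Items (i)--(v).} For (i), apply (\ref{e:homomorphism}) with $f(z)=e^{-t_{1}z^{\gamma}}$ and $g(z)=e^{-t_{2}z^{\gamma}}$ (truncating if necessary to fit into ${\mathcal H}$ and then passing to the limit), obtaining ${\bf S}_{\gamma}(t_{1}){\bf S}_{\gamma}(t_{2})={\bf S}_{\gamma}(t_{1}+t_{2})C_{1}$ first for $t_{i}>0$ and then throughout $\Sigma_{\varphi_{\gamma}}$ by analytic continuation. For (ii), reduce by equicontinuity to $x\in D({\mathcal A})$; picking $y\in{\mathcal A}x$ and using Theorem \ref{C-favini}(i), write $(\lambda-{\mathcal A})^{-1}C_{1}x=\lambda^{-1}C_{1}x+\lambda^{-1}(\lambda-{\mathcal A})^{-1}C_{1}y$; dominated convergence combined with Q1--Q2 and the Cauchy evaluation $\frac{1}{2\pi i}\int_{\Gamma}z^{-1}\,dz=1$ delivers the limit $C_{1}x$. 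Item (iii) follows from commutativity of the functional calculus for $\Re\nu>0$, extends by inversion to $\Re\nu<0$, and is reduced for $\nu=i\tau$ to the previous cases via the factorization defining $(-{\mathcal A})_{i\tau}$ together with (S.6). For (iv), items (i) and (ii) verify the $C_{1}$-regularized semigroup axioms under the density hypothesis; the subgenerator assertion is obtained by differentiating $S_{\gamma}(t)x$ under the contour integral, identifying the derivative with $-(-{\mathcal A})_{\gamma}S_{\gamma}(t)x$ by (\ref{e:homomorphism}), and applying Lemma \ref{integracija-tricky} to push the integral inside the closed MLO; without density one integrates once against $g_{\zeta}$ and obtains the $\zeta$-times integrated analogue. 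For (v), proceed by induction on $n$: the inclusion $\lambda(\lambda-{\mathcal A})^{-1}C_{1}x-C_{1}x\in{\mathcal A}((\lambda-{\mathcal A})^{-1}C_{1}x)$ from Theorem \ref{C-favini}(i), integrated against $\lambda^{n-1}f_{t}(\lambda)\,d\lambda$, combined with Q3 (which kills the $C_{1}x$ contribution) and Lemma \ref{integracija-tricky} (which moves the integral inside the closed MLO ${\mathcal A}$), propagates (\ref{qwer-zorbinjo}) from step $n-1$ to step $n$.

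\textbf{Item (vi): the incomplete fractional Cauchy inclusion.} Put $u(t):=S_{\gamma}(te^{i\theta})x$. Smoothness on $\Sigma_{\varphi_{\gamma}-|\theta|}$, the bound on $\{z^{j}u^{(j)}(z):z\in\Sigma_{\delta}\}$, and the initial condition $\lim_{t\to 0^{+}}u(t)=C_{1}x$ follow from analyticity, Cauchy's formula on nested small sectors, and item (ii) applied on the ray $\arg t=\theta$. The main obstacle is the inclusion $D_{-}^{\beta}u(t)\in e^{i\theta\beta}(-{\mathcal A})_{\gamma\beta}u(t)$. My plan is: (a) differentiate under the rotated contour integral to obtain $u^{(j)}(t)=(-e^{i\theta})^{j}(z^{j\gamma}e^{-te^{i\theta}z^{\gamma}})_{C_{1}}({\mathcal A})x$ for $j\ge 1$, giving in particular a contour representation of $u'$; (b) substitute into the definition of $D_{-}^{\beta}$, justify interchanging the contour integral with the time integrations by the exponential decay of $e^{-te^{i\theta}z^{\gamma}}$ valid whenever $|\theta|<\varphi_{\gamma}$, and compute the scalar kernel to establish the Liouville identity $D_{-}^{\beta}(e^{-se^{i\theta}z^{\gamma}})=e^{i\theta\beta}z^{\gamma\beta}\,e^{-se^{i\theta}z^{\gamma}}$ (verified directly for the exponential using the definition of $D^{\beta}_{-}$); (c) identify the resulting expression with $e^{i\theta\beta}(-{\mathcal A})_{\gamma\beta}u(s)$ through (S.4) (applied with $b=\gamma\beta$) and Lemma \ref{integracija-tricky} in order to push the integral inside the closed MLO $(-{\mathcal A})_{\gamma\beta}$. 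The most delicate point is (b): one must verify absolute integrability of the double integrand and the legitimacy of successive differentiations under the integral sign, both of which rely critically on the strict inequality $|\theta|<\varphi_{\gamma}$ and on the freedom to deform $\Gamma_{S}(\vartheta,d)$.
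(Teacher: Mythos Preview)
Your proposal is essentially correct and follows the same architecture as the paper: realize $S_{\gamma}(t)$ via Lemma~\ref{funk-racun-mlo}, drive everything through the product formula (\ref{e:homomorphism}), and invoke Lemma~\ref{integracija-tricky} together with Theorem~\ref{C-favini}(i) whenever an integral must be pushed inside a closed MLO. Two minor deviations deserve comment.

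For (iv) the paper does not differentiate $S_{\gamma}(t)x$: it applies (\ref{e:homomorphism}) to $z^{-\gamma}$ and $e^{-tz^{\gamma}}-1$ (both in ${\mathcal H}$) and reads off the integrated identity
\[
C_{1}S_{\gamma,1}(t)x=-(-{\mathcal A})_{C_{1}}^{-\gamma}\bigl[S_{\gamma}(t)x-C_{1}x\bigr],
\]
which gives property (B) in one stroke. Your derivative route also works (since $z^{-\gamma}\cdot z^{\gamma}e^{-tz^{\gamma}}=e^{-tz^{\gamma}}$ is again an instance of (\ref{e:homomorphism})), but it leaves a limiting argument at $t=0$ that the paper's integrated formulation avoids.

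For (vi), step (c), (S.4) is not the tool you want. After your steps (a)--(b) you have $e^{-i\theta\beta}D_{-}^{\beta}u(t)=\bigl(\cdot^{\gamma\beta}e^{-te^{i\theta}\cdot^{\gamma}}\bigr)_{C_{1}}({\mathcal A})x$; the inclusion into $(-{\mathcal A})_{\gamma\beta}u(t)$ then follows immediately from (\ref{e:homomorphism}) with $f(z)=z^{-\gamma\beta}$, $g(z)=z^{\gamma\beta}e^{-te^{i\theta}z^{\gamma}}$ and the definition $(-{\mathcal A})_{\gamma\beta}=\bigl((-{\mathcal A})_{C_{1}}^{-\gamma\beta}\bigr)^{-1}C_{1}$. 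That is exactly the paper's argument; (S.4) would instead require you to exhibit $u(t)$ as $C_{1}y$ with $y\in D({\mathcal A}^{k})$ and match the specific element it produces, which is unnecessarily indirect. Your contour-rotation description of the analytic extension is equivalent to the paper's construction via the rotated families $S_{\theta,\gamma}(t)$ built from $(\lambda-e^{i\theta}{\mathcal A})^{-1}C_{1}$.
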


\begin{proof}
The proof of (i) for real parameters $t_{1},\ t_{2}>0$ follows almost directly from definition of $S_{\gamma}(\cdot),$
by applying (\ref{e:homomorphism}); (v) is an easy consequence of Lemma \ref{integracija-tricky}, Theorem \ref{C-favini}(i)
and the property Q3. A very simple proof of (iii) is omitted. Set, for $|\theta|< \vartheta$ and $0<\gamma<1/2$,
$$
S_{\theta,\gamma}(t)x:=\int
\limits_{0}^{\infty}f_{t,\gamma}(\lambda) \bigl(
\lambda -e^{i\theta}{\mathcal A}\bigr)^{-1} C_{1}x\, d\lambda,\quad x\in X,\ t\in
\Sigma_{(\pi/2)-\gamma \pi}.
$$
Let $\theta_{1}\in (0,\vartheta)$ and $\theta_{2}\in (-\vartheta,0).$
Define
$$
{\bf S}_{\gamma}(t)x:=\left\{
\begin{array}{l}
S_{\gamma}(t)x,\ t\in \Sigma_{(\pi/2)-\gamma \pi}, \\
S_{\theta_{1},\gamma}(te^{-i\gamma \theta_{1}}),\mbox{ if }t\in
e^{i\gamma \theta_{1}}\Sigma_{(\pi/2)-\gamma \pi},\\
S_{\theta_{2},\gamma}(te^{-i\gamma \theta_{2}}),\mbox{ if }t\in
e^{i\gamma \theta_{2}}\Sigma_{(\pi/2)-\gamma \pi}.
\end{array}
\right.
$$
Then an elementary application of Cauchy formula shows that the operator family $({\bf S}_{\gamma}(t))_{t\in \Sigma_{\varphi_{\gamma}}}$ is well defined; furthermore, $({\bf S}_{\gamma}(t))_{t\in \Sigma_{\varphi_{\gamma}}}$ is strongly analytic
and equicontinuous on any proper subsector of $\Sigma_{\varphi_{\gamma}}$ (cf. also the proof of \cite[Theorem 2.9.48]{knjigah}). Using Theorem \ref{C-favini}(i), we get that
$\lim_{\lambda \rightarrow +\infty}[\lambda (\lambda-{\mathcal A})^{-1}C_{1}x-\lambda (\lambda+1)^{-1} C_{1}x]=0$ as $\lambda \rightarrow +\infty$ ($x\in D({\mathcal A})$). Taking into account this equality and the proof of \cite[Theorem 5.5.1(iv), p. 130]{martinez}, we get that $\lim_{t\rightarrow 0+}S_{\gamma}(t)x=C_{1}x,$ $x\in D({\mathcal A}).$ Now the remaining parts of proofs of (i)-(iii) can be straightforwardly completed.

We will prove (iv) provided that $D({\mathcal A})$ is dense in $X.$ It is clear that $(S_{\gamma}(t))_{t\geq 0}$ is an equicontinuous analytic
$C_{1}$-regularized semigroup $(S_{\gamma}(t))_{t\geq 0}$ of angle $\varphi_{\gamma}.$ Since, for every $t>0$ and $ x\in X,$
\begin{align*}
C_{1}\Bigl( -z^{-\gamma}e^{-tz^{\gamma}} +z^{-\gamma} \Bigr)_{C_{1}}({\mathcal A})x=-\bigl(z^{-\gamma} \bigr)_{C_{1}}({\mathcal A})\Bigl[ \bigl( e^{-tz^{\gamma}} \bigr)_{C_{1}}({\mathcal A})x-C_{1}x\Bigr],
\end{align*}
we have
\begin{align*}
C_{1}S_{\gamma,1}(t)x=-(-{\mathcal A})_{C_{1}}^{-\gamma}\Bigl[ S_{\gamma}(t)x -C_{1}x\Bigr],\quad t\geq 0,\ x\in X.
\end{align*}
This clearly implies that $(S_{\gamma,1}(t)x,S_{\gamma,1}(t)x -C_{1}x)\in -(-{\mathcal A})_{\gamma},$ $t\geq 0,$ $x\in X,$ so that
$(S_{\gamma,\zeta}(t))_{t\geq 0}$ is a $\zeta$-times integrated $C_{1}$-existence family
with a subgenerator
$-(-{\mathcal A})_{\gamma}.$ The supposition $(x,y) \in -(-{\mathcal A})_{\gamma}$ implies
$C_{1}x=-(-{\mathcal A})_{C_{1}}^{-\gamma}y$
and we can similarly prove that
$(C_{1}x,C_{1}y) \in \hat{{\mathcal A}}_{\gamma}.$

Arguing as in the proof of
\cite[Theorem 3.5(i)/(b)']{TAJW}, we get that, for every $x\in X$
and $t>0,$ the following equality holds, with $z=te^{i\theta}\in \Sigma_{(\pi /2)-\gamma \pi},$
\begin{equation}\label{metallica-MLOS}
D^{\beta}_{-}S_{\gamma}\bigl( te^{i\theta} \bigr)x=\frac{e^{i\theta
\beta}}{2\pi i} \int \limits^{\infty}_{0}\lambda^{\gamma
\beta}\Bigl[ e^{-i\gamma \beta \pi}e^{-z\lambda^{\gamma}e^{-i\pi
\gamma}}-e^{i\gamma \beta \pi}e^{-z\lambda^{\gamma}e^{i\pi \gamma}}
\Bigr]\bigl( \lambda -{\mathcal A}\bigr)^{-1}C_{1}x\, d\lambda.
\end{equation}
Deforming the path of integration $\Gamma_{S}(\vartheta, d)$ into the negative real axis, as it has been done in the proof of Lemma \ref{funk-racun-mlo}, we get
\begin{align}\label{metallica-MLOS-prim}
\bigl(\cdot^{\gamma \beta}e^{-z\cdot^{\gamma }}\bigr)_{C_{1}}({\mathcal A})=  \frac{1}{2\pi i} \int \limits^{\infty}_{0}\lambda^{\gamma
\beta}\Bigl[ e^{-i\gamma \beta \pi}e^{-z\lambda^{\gamma}e^{-i\pi
\gamma}}-e^{i\gamma \beta \pi}e^{-z\lambda^{\gamma}e^{i\pi \gamma}}
\Bigr]\bigl( \lambda -{\mathcal A}\bigr)^{-1}C_{1}x\, d\lambda.
\end{align}
Since
\begin{align*}
C_{1}\bigl(e^{-z\cdot^{\gamma }}\bigr)_{C_{1}}({\mathcal A})=\bigl(\cdot^{-\gamma \beta}\bigr)_{C_{1}}({\mathcal A})\bigl(\cdot^{-\gamma \beta}e^{-z\cdot^{\gamma }}\bigr)_{C_{1}}({\mathcal A}),
\end{align*}
(\ref{metallica-MLOS})-(\ref{metallica-MLOS-prim}) immediately implies that
\begin{align*}
\Bigl(e^{-i\theta \beta}D^{\beta}_{-}S_{\gamma}\bigl( te^{i\theta} \bigr)x,S_{\gamma}\bigl( te^{i\theta}\bigr)x\Bigr) \in C_{1}^{-1}(-{\mathcal A})_{C_{1}}^{\gamma \beta},
\quad t>0,\ x\in X,
\end{align*}
i.e.,
\begin{align*}
\Bigl(S_{\gamma}\bigl( te^{i\theta}\bigr)x,e^{-i\theta \beta}D^{\beta}_{-}S_{\gamma}\bigl( te^{i\theta} \bigr)x\Bigr) \in (-{\mathcal A})_{\gamma \beta},
\quad t>0,\ x\in X.
\end{align*}
The proof of (vi) now can be completed through a routine argument.
\end{proof}

\begin{rem}\label{inj-mlos-okj}
\begin{itemize}
\item[(i)] If $l=\beta \gamma \in {\mathbb N},$ then the operator $ (-{\mathcal A})_{\gamma \beta}$ in the formulation of problem (FP)$_{\beta}$ can be replaced with the operator $ (-{\mathcal A})^{l}$ therein; cf. (\ref{qwer-zorbinjo}).
\item[(ii)]
Suppose that the operator $C_{1}$ is injective. Then we can simply prove that $ (S_{\gamma,\zeta}(t))_{t\geq 0}$ is a $\zeta$-times integrated $C_{1}$-semigroup
with a subgenerator
$-(-{\mathcal A})_{\gamma},$ which implies \cite{catania} that the integral generator of $ (S_{\gamma,\zeta}(t))_{t\geq 0}$ is
$-C_{1}^{-1}(-{\mathcal A})_{\gamma}C_{1}=-(-{\mathcal A})_{\gamma}.$ A similar statement holds in the case that
$\gamma=1/2,$ which is further discussed in the following theorem.
\end{itemize}
\end{rem}

\begin{thm}\label{5511alm-MLO}
The limit contained in the
expression
\begin{equation}\label{1/2ce-MLO}
S_{1/2}(t)x:=\frac{1}{\pi}\lim \limits_{N\rightarrow \infty}\int
\limits^{N}_{0}\sin \bigl(t\sqrt{\lambda}\bigr)\bigl(\lambda -{\mathcal A}
\bigr)^{-1}C_{1}x\, d\lambda,\quad t>0,
\end{equation}
exists in $L(X)$ for every $x\in X.$ Put $S_{1/2}(0):=C_{1}.$
Then the family $\{S_{1/2}(t) : t>0\}$ is equicontinuous, there exists a strongly analytic operator family $({\bf S}_{1/2}(t))_{t\in \Sigma_{\varphi_{1/2}}}$ such that ${\bf S}_{1/2}(t)=S_{1/2}(t),$ $ t>0$
and the following holds:
\begin{itemize}
\item[(i)] ${\bf S}_{1/2}(t){\bf S}_{1/2}(s)={\bf S}_{1/2}(t+s)C_{1}$ for all $t,\ s\in \Sigma_{\varphi_{1/2}}.$
\item[(ii)]
$\lim_{t\rightarrow 0,t\in \Sigma_{\varphi_{1/2}-\epsilon}}{\bf S}_{1/2}(t)x=C_{1}x,$ $x\in \overline{D({\mathcal A})},$ $\epsilon \in
(0,\varphi_{1/2}).$
\item[(iii)] ${\bf S}_{1/2}(t)(-{\mathcal A})_{\nu}\subseteq
(-{\mathcal A})_{\nu}{\bf S}_{1/2}(t),$ $t\in \Sigma_{\varphi_{1/2}},$ $\nu \in {{\mathbb C}_{+}}.$
\item[(iv)] If $D({\mathcal A})$ is dense in $X,$ then $(S_{1/2}(t))_{t\geq 0}$ is an equicontinuous analytic
$C_{1}$-regularized semigroup of angle $\varphi_{\gamma}.$ Furthermore, $(S_{1/2}(t))_{t\geq 0}$ is a
$C_{1}$-regularized existence family
with a subgenerator
$-(-{\mathcal A})_{1/2}$
and the supposition $(x,y) \in -(-{\mathcal A})_{1/2}$ implies $(C_{1}x,C_{1}y) \in \hat{{\mathcal A}}_{1/2},$ where
$
\hat{{\mathcal A}}_{1/2}$ is the integral generator of $(S_{1/2}(t))_{t\geq 0};$
otherwise, for every $\zeta>0,$ $(S_{1/2,\zeta}(t))_{t\geq 0}$ is an exponentially equicontinuous, analytic $\zeta$-times integrated
$C_{1}$-regularized semigroup, $(S_{1/2,\zeta}(t))_{t\geq 0}$ is a $\zeta$-times integrated $C_{1}$-existence family
with a subgenerator
$-(-{\mathcal A})_{1/2}$ and the supposition $(x,y) \in -(-{\mathcal A})_{1/2}$ implies $(C_{1}x,C_{1}y) \in \hat{{\mathcal A}}_{1/2}.$
\item[(v)]
Then $R(S_{1/2}(t))\subseteq D_{\infty}({\mathcal A}),$ $t>0$ and, for every
$x\in \overline{D({\mathcal A})},$ the incomplete abstract Cauchy problem\index{incomplete abstract Cauchy problems}
\[ (P_{2}) : \left\{
\begin{array}{l}
u\in C^{\infty}\bigl((0,\infty ): X\bigr),\\
u^{\prime \prime }(t)\in -{\mathcal A} u(t),\ t>0, \\
\lim_{t\rightarrow 0+}u(t)=C_{1}x,\\
\mbox{the set } \{ u(t) : t>0\} \mbox{ is bounded in }X,
\end{array}%
\right.
\]
has a solution $u(t)=S_{1/2}(t)x,$ $t>0.$ Moreover, the
mapping $t\mapsto u(t),$ $t>0$ can be analytically extended to the
sector $\Sigma_{\varphi_{1/2}}$ and, for every $\delta \in
(0,\varphi_{1/2})$ and $j\in {\mathbb{N}}_{0},$ we have that
the set $\{z^{j}u^{(j)}(z) : z\in \Sigma_{\delta}\} $ is bounded in
$X.$
\end{itemize}
\end{thm}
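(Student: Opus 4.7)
The case $\gamma=1/2$ is subtle because the integrand in \eqref{1/2ce-MLO} is only conditionally convergent. My plan is to bypass this by \emph{first} defining the analytic family via an absolutely convergent contour integral and then extracting \eqref{1/2ce-MLO} as a corollary. With the principal branch of $z^{1/2}$ on $\mathbb{C}\setminus(-\infty,0]$, set
\[
\mathbf{S}_{1/2}(t)x := \frac{1}{2\pi i}\int_{\Gamma_{S}(\vartheta, d)} e^{-tz^{1/2}}\bigl(z + \mathcal{A}\bigr)^{-1}C_{1}x\, dz, \qquad x\in X,\ t\in\Sigma_{\varphi_{1/2}}.
\]
On each ray of $\Gamma_{S}(\vartheta,d)$ one has $\Re z^{1/2}=|z|^{1/2}\sin(\vartheta/2)$, so for $t\in\Sigma_{\delta}$ with $\delta<\varphi_{1/2}=\vartheta/2$ the factor $|e^{-tz^{1/2}}|$ decays exponentially and uniformly; combined with the equicontinuity of $\{(1+|\lambda|)^{-1}(\lambda-\mathcal{A})^{-1}C_{1}\}$ from (HS), this yields absolute convergence, strong analyticity of $t\mapsto\mathbf{S}_{1/2}(t)$ on $\Sigma_{\varphi_{1/2}}$, and equicontinuity on every proper subsector.

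To recover \eqref{1/2ce-MLO} for $t>0$, I would collapse $\Gamma_{S}(\vartheta,d)$ onto the cut $(-\infty,0]$ exactly as in Lemma~\ref{funk-racun-mlo}: the upper/lower boundary values $z^{1/2}\to\pm i\sqrt{|\xi|}$ combine through $e^{it\sqrt{|\xi|}}-e^{-it\sqrt{|\xi|}}=2i\sin(t\sqrt{|\xi|})$, converting the two rays into $(1/\pi)\sin(t\sqrt\lambda)(\lambda-\mathcal{A})^{-1}C_{1}x$ and so simultaneously delivering the $L(X)$-existence of the improper limit in \eqref{1/2ce-MLO} and the identity $\mathbf{S}_{1/2}(t)=S_{1/2}(t)$ for $t>0$. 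Items (i)--(iv) then parallel Theorem~\ref{thm-mlo-fgrp}: (i) follows from \eqref{e:homomorphism} with $f(z)=e^{-tz^{1/2}}$ and $g(z)=e^{-sz^{1/2}}$; (iii) from (S.1) and the fact that $(-\mathcal{A})_{\nu}$ commutes with every $(z+\mathcal{A})^{-1}C_{1}$ inside the integral; (iv) from the functional-calculus identity
\[
C_{1}\mathbf{S}_{1/2,1}(t)x=-(-\mathcal{A})_{C_{1}}^{-1/2}\bigl[\mathbf{S}_{1/2}(t)x-C_{1}x\bigr],
\]
obtained by applying $f_{C_{1}}(\mathcal{A})$ to $-z^{-1/2}e^{-tz^{1/2}}+z^{-1/2}$; and (ii) from density of $D(\mathcal{A})$ in $\overline{D(\mathcal{A})}$ together with $\lim_{\lambda\to+\infty}\lambda(\lambda-\mathcal{A})^{-1}C_{1}x=C_{1}x$ for $x\in D(\mathcal{A})$, as in the $\gamma<1/2$ argument.

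The heart of the theorem is (v). Differentiating twice under the absolutely convergent contour integral (justified by exponential decay of the integrand) gives
\[
u''(t)=\frac{1}{2\pi i}\int_{\Gamma_{S}(\vartheta,d)} z\, e^{-tz^{1/2}}\bigl(z+\mathcal{A}\bigr)^{-1}C_{1}x\, dz.
\]
Applying Theorem~\ref{C-favini}(i) with $-\mathcal{A}$ in place of $\mathcal{A}$ produces
\[
\Bigl(\bigl(z+\mathcal{A}\bigr)^{-1}C_{1}x,\ z\bigl(z+\mathcal{A}\bigr)^{-1}C_{1}x-C_{1}x\Bigr)\in-\mathcal{A},
\]
so Lemma~\ref{integracija-tricky} yields $(u(t),\,u''(t)-r(t)\,C_{1}x)\in-\mathcal{A}$ with $r(t):=(2\pi i)^{-1}\int_{\Gamma_{S}}e^{-tz^{1/2}}\,dz$. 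The decisive step is $r(t)\equiv 0$: close $\Gamma_{S}(\vartheta,d)$ by a large arc in the right half-plane, where $\Re z^{1/2}\ge|z|^{1/2}/\sqrt{2}>0$ forces exponential decay of $e^{-tz^{1/2}}$; since the closed contour encircles no singularity of $e^{-tz^{1/2}}$, Cauchy's theorem and the vanishing arc contribution give $r(t)=0$. Thus $u''(t)\in-\mathcal{A}u(t)$; iterating this identity shows $u^{(2n)}(t)\in(-\mathcal{A})^{n}u(t)$, whence $R(S_{1/2}(t))\subseteq D_{\infty}(\mathcal{A})$. Smoothness of $u$, the limit $u(t)\to C_{1}x$ on $\overline{D(\mathcal{A})}$, boundedness of $\{u(t):t>0\}$, and the bounds on $\{z^{j}u^{(j)}(z):z\in\Sigma_{\delta}\}$ follow from the contour representation combined with the scaling $z\mapsto t^{-2}z$ and the equicontinuity established in the first paragraph.

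The main obstacle is twofold: the contour-collapse identifying \eqref{1/2ce-MLO} must be carried out uniformly in $x$ and in $t$ ranging in a proper subsector of $\Sigma_{\varphi_{1/2}}$; and the Cauchy-theorem computation $r(t)=0$ in (v) hinges on a careful treatment of the principal branch of $z^{1/2}$ together with the exponential decay of $e^{-tz^{1/2}}$ along the closing arc, a feature that has no direct analogue in the $\gamma<1/2$ argument (where the oscillatory integral itself was absolutely convergent after the contour deformation).
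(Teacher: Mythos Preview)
Your route is genuinely different from the paper's. The paper never writes the absolutely convergent contour integral $(e^{-tz^{1/2}})_{C_{1}}(\mathcal{A})$; instead it \emph{starts} from the conditionally convergent expression \eqref{1/2ce-MLO}, integrates by parts to the absolutely convergent form $\int_{0}^{\infty}f(\lambda,t)(\lambda-\mathcal{A})^{-2}C_{1}x\,d\lambda$, and reads off equicontinuity via the substitution $u=t\sqrt{\lambda}$. The analytic extension to $\Sigma_{\vartheta/2}$ and all of (v) are then obtained by an elaborate limit $\gamma\to 1/2{-}$: one writes Laplace representations for $S_{\theta,\gamma}$ with the rotated operators $e^{\pm i\theta}\mathcal{A}$, passes to the limit to define $\mathbf{S}_{1/2}$, and for (v) pushes the known inclusion $D_{-}^{1/\gamma}S_{\gamma}(t)x\in-\mathcal{A}S_{\gamma}(t)x$ through dominated convergence. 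Your functional–calculus argument for (v), with $r(t)=0$ by Cauchy, is considerably more transparent than that limiting procedure and avoids all the Laplace-transform machinery.

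Where your plan is thin is precisely where the paper needed its extra work. First, the collapse to \eqref{1/2ce-MLO} is \emph{not} ``exactly as in Lemma~\ref{funk-racun-mlo}'': in that lemma $\Re z^{\gamma}>0$ on the cut, so the collapsed integral is absolutely convergent, whereas here $\Re z^{1/2}=0$ on $(-\infty,0)$ and the real integral is only conditionally convergent. To make the collapse honest you must truncate at radius $N$, close by arcs from angle $\pm\pi$ to $\pm(\pi-\vartheta)$, and show those arc-contributions vanish; this uses the $C_{1}$-sectorial bound $\|(z+\mathcal{A})^{-1}C_{1}\|=O(|z|^{-1})$, not the weaker $(1+|\lambda|)^{-1}$ bound you cite from (HS). Second, and more seriously, your claim of ``equicontinuity on every proper subsector'' does not follow from the estimate you give. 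With the (HS) bound your integrand is $O((1+|z|)\,e^{-c|t||z|^{1/2}})$, giving at best $O(|t|^{-4})$ as $t\to 0$; even with the sharper $O(|z|^{-1})$ bound the naive contour estimate only yields $O(\log(1/|t|))$. But equicontinuity of $\{S_{1/2}(t):t>0\}$ is exactly what you need to pass from $D(\mathcal{A})$ to $\overline{D(\mathcal{A})}$ in (ii), and to get the boundedness in $(P_{2})$. The paper's integration-by-parts $+$ scaling is doing real work here that your scheme lacks; you would need either to import that step or to find a contour-side substitute (e.g.\ an integration by parts on $\Gamma_{S}$ against $(z+\mathcal{A})^{-2}C_{1}$).
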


\begin{proof}
First of all, observe that $\varphi_{1/2}=\vartheta/2.$
Applying the
partial integration, (\ref{creso}) and the
equicontinuity of family $\{\lambda^{2}(\lambda -{\mathcal A})^{-2}C_{1} : \lambda>0
\},$ we obtain that the limit contained in (\ref{1/2ce-MLO}) exists and
equals
\begin{align*}
S_{1/2}(t)x=\int
\limits_{0}^{\infty}f(\lambda,t)\bigl(\lambda-{\mathcal A}\bigr)^{-2}C_{1}x\,
d\lambda,\quad t>0,\ x\in X,
\end{align*}
where
$f(\lambda,t)=2\pi^{-1}t^{-2}[\sin(t\sqrt{\lambda})-t\sqrt{\lambda}\cos(t\sqrt{\lambda})]$
for $\lambda>0$ and $t>0.$ As in single-valued case, the change of variables
$x=t\sqrt{\lambda}$ shows that the operator family
$\{S_{1/2}(t) : t> 0\}$ is both equicontinuous and strongly continuous. Let $(x,y)\in {\mathcal A}.$ Then Theorem \ref{C-favini}(i) and an elementary argumentation show that
\begin{align*}
S_{1/2}(t)x-C_{1}x&=\frac{1}{\pi}\lim \limits_{N\rightarrow \infty}\int
\limits^{N}_{0}\sin \bigl(t\sqrt{\lambda}\bigr)\Bigl( \bigl(
\lambda+A\bigr)^{-1}C_{1}x- \lambda^{-1}C_{1}x\Bigr)\, d\lambda\\ &=
\frac{1}{\pi}\lim \limits_{N\rightarrow \infty}\int
\limits^{N}_{0}\frac{\sin
\bigl(t\sqrt{\lambda}\bigr)}{\lambda}\bigl(
\lambda -{\mathcal A}\bigr)^{-1}C_{1}y\, d\lambda .
\end{align*}
Keeping in mind the last equality and the equicontinuity of family $\{S_{1/2}(t) :
t\geq 0\}$, we get that $\lim_{t\rightarrow 0}S_{1/2}(t)x=C_{1}x$ for all $x\in \overline{D({\mathcal A})}.$

Now we proceed as in the proof of \cite[Theorem 2.6(i)]{inc}.
Let $0<\delta'<\delta<\vartheta/2,$ $1/2>\gamma_{0}>\delta/\vartheta$ and
$\theta \in (-\vartheta,(-\delta)/\gamma_{0}).$ Then, for every $\gamma \in (\gamma_{0},1/2),$ we have $\theta \in (-\vartheta,(-\delta)/\gamma)$ and $\gamma>\delta/\vartheta.$
Let $\epsilon \in (0,(\pi -\vartheta)/2)$ be sufficiently small. Define, for every $\gamma \in (\gamma_{0},1/2)$ and $x\in X,$
\begin{align*}
F_{\gamma}(\lambda)x:= \left\{
\begin{array}{l}
\frac{e^{i\theta \gamma}\sin \gamma \pi}{\pi}\int^{\infty}_{0}\frac{v^{\gamma}
( v-e^{i\theta}{\mathcal A} )^{-1}C_{1}x\, dv}{( \lambda e^{i\theta \gamma}+v^{\gamma}\cos \pi \gamma)^{2}+v^{2\gamma}
\sin^{2}\gamma \pi}, \mbox{ if }\arg(\lambda) \in (-\epsilon,(\pi/2)+\delta),\\
\frac{e^{-i\theta \gamma}\sin \gamma \pi}{\pi}\int^{\infty}_{0}\frac{v^{\gamma}
( v-e^{-i\theta}{\mathcal A})^{-1}C_{1}x\, dv}{( \lambda e^{-i\theta \gamma}+v^{\gamma}\cos \pi \gamma)^{2}+v^{2\gamma}\sin^{2}\gamma \pi},\mbox{ if }\arg(\lambda) \in (-(\pi/2)-\delta,\epsilon).
\end{array}
\right.
\end{align*}
If $x\in X$ and $\arg(\lambda) \in (-\epsilon,(\pi/2)+\delta),$ resp., $\arg(\lambda) \in (-(\pi/2)-\delta,\epsilon),$ then
\begin{equation}\label{blues}
\int^{\infty}_{0}e^{-\lambda e^{i\theta \gamma}t}S_{\theta,\gamma}(t)x\, dt=\frac{\sin \gamma \pi}{\pi}\int^{\infty}_{0}\frac{v^{\gamma}\bigl( v-e^{i\theta}{\mathcal A} \bigr)^{-1}C_{1}x}{\bigl( \lambda e^{i\theta \gamma}+v^{\gamma}\cos \pi \gamma\bigr)^{2}+v^{2\gamma}\sin^{2}\gamma \pi}\, dv,
\end{equation}
resp.,
\begin{equation}\label{blues-1}
\int^{\infty}_{0}e^{-\lambda e^{-i\theta \gamma}t}S_{-\theta,\gamma}(t)x\, dt=\frac{\sin \gamma \pi}{\pi}\int^{\infty}_{0}\frac{v^{\gamma}\bigl( v+e^{-i\theta}{\mathcal A} \bigr)^{-1}C_{1}x}{\bigl( \lambda e^{-i\theta \gamma }+v^{\gamma}\cos \pi \gamma\bigr)^{2}+v^{2\gamma}\sin^{2}\gamma \pi}\, dv.
\end{equation}
Furthermore,
\begin{equation}\label{blues-2}
e^{i\theta \gamma}\int^{\infty}_{0}e^{-\lambda e^{i\theta \gamma}t}S_{\theta,\gamma}(t)x\, dt=
e^{-i\theta \gamma}\int^{\infty}_{0}e^{-\lambda e^{-i\theta \gamma}t}S_{-\theta,\gamma}(t)x\, dt,\quad \lambda \in \Sigma_{\epsilon}.
\end{equation}
By (\ref{blues})-(\ref{blues-2}), we deduce that the function $\lambda \mapsto F_{\gamma}(\lambda)x,$ $\lambda \in \Sigma_{(\pi/2)+\delta}$
is well defined, analytic and bounded by $\text{Const}_{\delta'}|\lambda|^{-1}$ on sector $\Sigma_{(\pi/2)+\delta'}$ ($x\in X$), as well as
\begin{align}\label{jednakost-MLOs-sad}
S_{\gamma}(z)x=\frac{1}{2\pi i}\int_{\Gamma_{\delta',z}}e^{\lambda z}F_{\gamma}(\lambda)x\, d\lambda,\quad x\in X,\ z\in \Sigma_{\delta'},\ \gamma \in (\gamma_{0},1/2),
\end{align}
where $\Gamma_{\delta',z}:= \Gamma_{\delta',z,1} \cup \Gamma_{\delta',z,2} ,$ $\Gamma_{\delta',z,1}:=\{re^{i((\pi/2)+\delta')} : r\geq
|z|^{-1}\} \cup \{|z|^{-1}e^{i\vartheta} : \vartheta \in
[0,(\pi/2)+\delta']\}$ and $\Gamma_{\delta',z,2}:=\{
re^{-i((\pi/2)+\delta')} : r\geq |z|^{-1}\} \cup
\{|z|^{-1}e^{i\vartheta} : \vartheta \in [-(\pi/2)-\delta',0]\}$ are oriented counterclockwise.
The dominated convergence theorem shows that, for every $x\in X$ and $ z\in \Sigma_{\delta'},$
\begin{align*}
\lim \limits_{\gamma \rightarrow \frac{1}{2}-}S_{\gamma}(z)x&=\frac{e^{i\theta/2}}{2\pi^{2}i}\int_{\Gamma_{\delta',z,1}}e^{\lambda z}
\int^{\infty}_{0}\frac{v^{1/2}\bigl( v-e^{i\theta}{\mathcal A} \bigr)^{-1}C_{1}x}{\lambda^{2}e^{i\theta}+v}\, dv\, d\lambda
\\ & +\frac{e^{-i\theta/2}}{2\pi^{2}i}\int_{\Gamma_{\delta',z,2}}e^{\lambda z}
\int^{\infty}_{0}\frac{v^{1/2}\bigl( v-e^{-i\theta}{\mathcal A} \bigr)^{-1}C_{1}x}{\lambda^{2}e^{-i\theta}+v}\, dv\, d\lambda
\\ & :={\bf S}_{1/2}(z)x.
\end{align*}
Define $F_{1/2}(\lambda)$ by replacing the number $\gamma$ with the number $1/2$ in definition of $F_{\gamma}(\lambda).$ Then, for every $x\in X,$ the function $\lambda \mapsto F_{1/2}(\lambda)x,$ $\lambda \in \Sigma_{(\pi/2)+\delta}$
is well defined and analytic on $ \Sigma_{(\pi/2)+\delta};$  furthermore,
for each $q\in \circledast$ there exists $r_{q}\in \circledast$ such that $q(F_{1/2}(\lambda)x)\leq r_{q}(x)
\text{Const}_{\delta'}|\lambda|^{-1},$ $\lambda \in \Sigma_{(\pi/2)+\delta'},$ $x\in X$ (\cite{inc}). Define $({\bf S}_{1/2}(z))_{z\in \Sigma_{\vartheta/2}}\subseteq L(X)$ by ${\bf S}_{1/2}(z)x:=\lim_{\gamma \rightarrow \frac{1}{2}-}S_{\gamma}(z)x,$
$z\in \Sigma_{\vartheta/2},$ $x\in X$; this operator family is equicontinuous on any proper
subsector of $\Sigma_{\vartheta/2}$ and
satisfies additionally that
the mapping $z\mapsto {\bf S}_{1/2}(z)x,$ $z\in \Sigma_{\vartheta/2}$ is analytic for all $x\in X.$ Letting $ \gamma \rightarrow \frac{1}{2}-$ in (\ref{jednakost-MLOs-sad}), it is not difficult to prove that
\begin{align*}
{\bf S}_{\frac{1}{2}}(z)x=\frac{1}{2\pi i}\int_{\Gamma_{\delta',z}}e^{\lambda z}F_{\frac{1}{2}}(\lambda)x\, d\lambda,\quad x\in X,\ z\in \Sigma_{\delta'},
\end{align*}
so that the proof of \cite[Theorem 2.6.1]{a43} implies
\begin{align}\label{jednakost-MLOs-sad-sa}
\int \limits^{\infty}_{0}e^{-\lambda t}{\bf S}_{\frac{1}{2}}(t)x\, dt=F_{\frac{1}{2}}(\lambda)x,\quad x\in X,\ \lambda>0.
\end{align}
On the other hand, the arguments used in the proof of \cite[Theorem 5.5.2, p. 133]{martinez} show that
\begin{align}\label{jednakost-MLOs-sad-sade}
\int \limits^{\infty}_{0}e^{-\lambda t}S_{\frac{1}{2}}(t)x\, dt=\frac{1}{\pi}\int\limits^{\infty}_{0}\frac{\sqrt{\nu}}{\lambda^{2}+\nu}
\bigl(\nu-{\mathcal A}\bigr)^{-1}C_{1}x\, d\nu
=F_{\frac{1}{2}}(\lambda)x,\quad x\in X,\ \lambda>0.
\end{align}
Using the uniqueness theorem for the Laplace transform, we obtain from (\ref{jednakost-MLOs-sad-sa})-(\ref{jednakost-MLOs-sad-sade})
that ${\bf S}_{1/2}(t)=S_{1/2}(t),$ $ t>0.$ Now the proofs of (i)-(iii) become standard and therefore omitted.

For simplicity, we assume that ${\mathcal A}$ is densely defined in (iv). Then the only non-trivial thing that should be proved is that the supposition $(x,y) \in -(-{\mathcal A})_{1/2}$ implies $(C_{1}x,C_{1}y) \in \hat{{\mathcal A}}_{1/2}.$ So, let $(x,y) \in -(-{\mathcal A})_{1/2},$ i.e., $C_{1}x=-(-{\mathcal A})_{C_{1}}^{-1/2}y.$ 
A similar line of reasoning as in the proof of identity \cite[(51), p. 489]{TAJW} shows that
\begin{align*}
C_{1}\int^{\infty}_{0}e^{-\lambda t}S_{\gamma}(t)y \, dt=C_{1}\bigl( - {\mathcal A}\bigr)_{C_{1}}^{-\gamma}y-\lambda \int^{\infty}_{0}e^{-\lambda t}S_{\gamma}(t)y\, dt,\quad \lambda>0,\ \gamma \in (0,1/2).
\end{align*}
Taking the limits of both sides of previous equality when $\gamma \rightarrow 1/2-,$ we get that
\begin{align*}
C_{1}\int^{\infty}_{0}e^{-\lambda t}S_{1/2}(t)y \, dt=C_{1}\bigl( - {\mathcal A}\bigr)_{C_{1}}^{-1/2}y-\lambda \int^{\infty}_{0}e^{-\lambda t}S_{1/2}(t)y\, dt,\quad \lambda>0.
\end{align*}
Then the uniqueness theorem for the Laplace transform simply implies that
$$
S_{1/2}(t)C_{1}x-C_{1}^{2}x=\int^{t}_{0}S_{1/2}(s)C_{1}y\, ds,\quad t\geq 0,
$$
as claimed.

Now we will prove (v) by slightly modifying the arguments used in the corresponding part of proof of \cite[Theorem 2.6(i)]{inc}.
In order to do that, we will first show that
for each $x\in X$ we have $S_{1/2}^{\prime \prime}(t)x\in -{\mathcal A}S_{1/2}(t)x,$ $t>0.$ Fix temporarily an element $x\in X.$ Owing to Theorem \ref{thm-mlo-fgrp}(v)
and (\ref{metallica-MLOS}), cf. also Remark \ref{inj-mlos-okj}(i), we have that
\begin{align*}
D_{-}^{\frac{1}{\gamma}}S_{\gamma}^{\prime}(t)x\in -{\mathcal A}S_{\gamma}(t)x,\quad t>0,
\end{align*}
i.e.,
\begin{align*}
\frac{d^{2}}{dt^{2}}\int_{0}^{\infty}g_{3-\frac{1}{\gamma}}(s)S_{\gamma}^{\prime}(t+s)x\, ds\in {\mathcal A}S_{\gamma}(t)x,\quad t>0,\ \gamma \in (\gamma_{0},1/2).
\end{align*}
Therefore,
$$
\int_{0}^{\infty}g_{3-\frac{1}{\gamma}}(s)S_{\gamma}^{\prime \prime \prime}(t+s)x\, ds\in {\mathcal A}S_{\gamma}(t)x,\quad t>0,\ \gamma \in (\gamma_{0},1/2).
$$
Applying the partial integration, we get
$$
\int_{0}^{\infty}g_{4-\frac{1}{\gamma}}(s)S_{\gamma}^{(iv)}(t+s)x\, ds \in -{\mathcal A}S_{\gamma}(t)x,\quad t>0,\ \gamma \in (\gamma_{0},1/2).
$$
The dominated convergence theorem yields by letting $\gamma \rightarrow 1/2-$ that
$$
\int^{\infty}_{0}sS_{1/2}^{(iv)}(t+s)x\, ds \in -{\mathcal A}S_{1/2}(t)x,\quad t>0,
$$
which clearly implies after an application of integration by parts that $S_{1/2}^{\prime \prime}(t)x\in -{\mathcal A}S_{1/2}(t)x,$ $t>0,$
as claimed. By (ii), the function $u(t)= S_{1/2}(t)x,$ $t>0$ is a solution of problem $(P_{2})$ for $x\in \overline{D({\mathcal A})}.$
Furthermore, we obtain by induction that $S_{1/2}^{(2n)}(t)x\in (-1)^{n}{\mathcal A}^{n}S_{1/2}(t)x,$ $t>0,$ $n\in {\mathbb N},$ $x\in X,$
so that $R(S_{1/2}(t))\subseteq D_{\infty}({\mathcal A}),$ $t>0.$ The proof of the theorem is thereby complete.
\end{proof}

\begin{example}\label{miao-milica-integrisane}
Examples of exponentially bounded integrated semigroups generated by multivalued linear operators can be found in \cite[Section 5.3, Section 5.8]{faviniyagi} (cf. also \cite{arendt-favini}); multivalued matricial operators on product spaces can also generate exponentially bounded degenerate integrated semigroups (see e.g. \cite[Example 3.2.24]{knjigaho} for single-valued case).
It is also worth noting that multivalued linear operators whose resolvent sets contain certain exponential regions \cite{a22} have been considered in \cite[Example 3.2.11(i)]{FKP} as generators of degenerate local once integrated
semigroups. All these examples can serve one to provide possible applications of Theorem \ref{thm-mlo-fgrp} and Theorem \ref{5511alm-MLO}.
\end{example}

\begin{example}\label{miao-milica-konvolucione}
Assume that $(M_p)$ is a sequence of positive real numbers
such that $M_0=1$ and that the following conditions are fulfilled:
\[
\tag{M.1}
M_p^2\leq M_{p+1} M_{p-1},\;\;p\in\mathbb{N},
\]
\begin{gather*}\tag{M.2}
M_p\leq AH^p\sup_{0\leq i\leq p}M_iM_{p-i},\;\;p\in\mathbb{N},\mbox{ for some }A,\ H>1,
\end{gather*}
and
\[\tag{M.3}
\sup_{p\in\mathbb{N}}\sum_{q=p+1}^{\infty}\frac{M_{q-1}M_{p+1}}{pM_pM_q}<\infty .
\]
For example, we can take the Gevrey sequence $(M_{p}\equiv p!^s)$ with $s>1.$
Set
$m_p:=\frac{M_p}{M_{p-1}}$, $p\in\mathbb{N}$ and
$\omega(z):=\prod_{i=1}^{\infty}\bigl(1+\frac{iz}{m_p}\bigr)$, $z\in\mathbb{C}$. Suppose that
there exist constants $l>0$ and $\omega>0$ satisfying that
$
RHP_{\omega}\equiv \{ \lambda \in {\mathbb C} : \Re \lambda >\omega\} \subseteq\rho({\mathcal A})$ and the operator family $\{e^{-M(l|\lambda|)}R(\lambda : {\mathcal A}) \, | \, \lambda\in RHP_{\omega}\}\subseteq L(X)$ is equicontinuous (cf. \cite{knjigah}-\cite{knjigaho} for a great number of such examples with ${\mathcal A}$ being single-valued, and \cite[Example 3.25]{catania} for purely multivalued linear case, with $X$ being a Fr\' echet space). Let $\bar{\omega}>\omega.$ Then there exists a sufficiently large number $n \in {\mathbb N}$ such that the expression
\begin{equation}\label{spiritual}
S(t):=\frac{1}{2\pi i}\int\limits_{\bar{\omega}-i\infty}^{\bar{\omega}+i\infty}e^{\lambda t}\frac{R(\lambda\!:\!{\mathcal A})}{\omega^{n}(i\lambda)}\,d\lambda,\quad t\geq 0
\end{equation}
defines an exponentially equicontinuous $C\equiv S(0)$-regularized semigroup $(S(t))_{t\geq 0}$ with a subgenerator ${\mathcal A}$ (cf. Lemma \ref{integracija-tricky}, Theorem \ref{C-favini}(i) and the proof of \cite[Theorem 3.6.4]{knjigah}). It is not difficult to prove that
$(\lambda -{\mathcal A})^{-1}Cf=\int^{\infty}_{0}e^{-\lambda t}S(t)f\, dt,$ $\Re \lambda >\bar{\omega},$ $f\in X,$ so that
 Theorem \ref{thm-mlo-fgrp} and Theorem \ref{5511alm-MLO} can be applied with the operator ${\mathcal A}$ replaced with the operator ${\mathcal A}-\bar{\omega}$ therein. Observe that,  even in single-valued linear case, the operator $C$ need not be injective because our
assumptions do not imply that ${\mathcal A}=A$ generates an ultradistribution semigroup of Beurling class (cf. \cite{FKP} for the notion).
\end{example}

In this paper, we will not discuss the generation of degenerate fractional regularized resolvent families by the negatives of constructed fractional powers. For more details, cf. \cite[Section 3]{TAJW} and \cite[Remark 2.9.49]{knjigah}.

At the end of paper, we would like to observe that the assertions of \cite[Theorem 2.4-Theorem 2.6]{inc} can be formulated in the multivalued linear operators setting. For applications, the most important is the following case:
$X$ is a Banach space, $\Sigma_{\vartheta} \cup B_{d} \subseteq \rho({\mathcal A}),$ there exist finite numbers $M_{1}\geq 1$ and $\nu \in (0,1]$ such that (\ref{fracvt-power-t-power}) holds with the operator ${\mathcal A}$ and number $\beta$ replaced with $-{\mathcal A}$ and $\nu$ therein; see \cite[Chapter III, Chapter VI]{faviniyagi} for a great number of concrete examples. Define the operators $S_{\gamma}(\cdot)$ as before.
Then we may conclude the following:
\begin{itemize}
\item[(i)] Suppose that $\beta \gamma >1-\nu.$ Then $(S_{\gamma}(t))_{t\in \Sigma_{\varphi_{\gamma}}}$ is an analytic semigroup of growth order $\frac{1-\nu}{\gamma}$ (cf. \cite{tanakaa} or \cite[Definition 2.1]{inc} for the notion). Denote by $\Omega_{\theta,\gamma},$ resp. $\Psi_{\gamma},$
 the continuity set of $(S_{\gamma}(te^{i\theta}))_{t>0},$ resp.
$(S_{\gamma}(t))_{t\in \Sigma_{\varphi_{\gamma}}}.$ Then $\overline{D({\mathcal A})}\subseteq \Psi_{\gamma}$ and, for
every $x\in \Omega_{\theta,\gamma},$ the incomplete abstract Cauchy
inclusion $(FP_{\beta}),$ with $C_{1}=I,$ has a solution $u(t)=S_{\gamma}(te^{i\theta})x,$ $t>0,$ which can be
analytically extended to the sector $\Sigma_{\varphi_{\gamma}-|\theta|}.$ If, additionally, $x\in \Psi_{\gamma},$ then for
every $\delta \in (0,\varphi_{\gamma})$ and $j\in
{\mathbb{N}}_{0},$ we have that the set $\{z^{j}u^{(j)}(z) : z\in
\Sigma_{\delta}\} $ is bounded in $X.$
\item[(ii)] Suppose that $1/2<
\nu<1.$ Then the incomplete abstract Cauchy problem\index{incomplete abstract Cauchy problems}
$(P_{2}) ,$ with $C_{1}=I,$
has a solution $u(t),$ $t>0$ for all $x\in D({\mathcal A}).$ Moreover, the
mapping $t\mapsto u(t),$ $t>0$ can be analytically extended to the
sector $\Sigma_{\varphi_{1/2}}$ and, for every $\delta \in
(0,\varphi_{1/2})$ and $j\in {\mathbb{N}}_{0},$ we have that
the set $\{z^{j}(1+|z|^{2\nu-2})^{-1}u^{(j)}(z) : z\in \Sigma_{\delta}\} $ is bounded in
$X.$
\end{itemize}

It is very non-trivial to find some necessary and sufficient conditions ensuring the uniqueness of solutions of problems $(FP_{\beta})$
and $(P_{2}) ;$ cf. also \cite{inc}. This is an open problem we would like to address to our researchers.

\end{document}